\newtheorem{theorem}{Theorem}[section]
\newtheorem{conjecture}[theorem]{Conjecture}
\newtheorem{corollary}[theorem] {Corollary}
\newtheorem{definition}[theorem]{Definition}
\newtheorem{lemma}[theorem]{Lemma}
\newtheorem{proposition}[theorem]{Proposition}
\newtheorem{result}[theorem]{Result}
\title{Exploring the Influence of Graph Operations\\on Zero Forcing Sets}
\author{Krishna Menon\footnote{Department of Mathematics, Chennai Mathematical Institute, India. Email: krishnamenon@cmi.ac.in} \ and Anurag Singh\footnote{Department of Mathematics, Indian Institute of Technology Bhilai, India. Email: anurags@iitbhilai.ac.in}}
\date{}
\begin{document}

\maketitle

\begin{abstract}
    Zero forcing in graphs is a coloring process where a colored vertex can \emph{force} its unique uncolored neighbor to be colored. 
    A zero forcing set is a set of initially colored vertices capable of eventually coloring all vertices of the graph. 
    In this paper, we focus on the numbers $z(G; i)$, which is the number of zero forcing sets of size $i$ of the graph $G$. 
    These numbers were initially studied by Boyer et al. where they conjectured that for any graph $G$ on $n$ vertices, $z(G; i) \leq z(P_n; i)$ for all $i \geq 1$ where $P_n$ is the path graph on $n$ vertices. 
    The main aim of this paper is to show that several classes of graphs, including outerplanar graphs and threshold graphs, satisfy this conjecture. 
    We do this by studying various graph operations and examining how they affect the number of zero forcing sets.
\end{abstract}

\noindent\textbf{Keywords:} Zero forcing set, path graph, tree, outerplanar graph, threshold graph\\
\textbf{MSC2020:} 05C05, 05C15, 05C30, 05C69

\section{Introduction}
Suppose we are given a simple graph $G$ where some vertices are colored. 
The \emph{zero forcing rule} is the following: 
If there is a colored vertex $v$ such that its neighborhood contains a unique uncolored vertex $w$, then $w$ can be colored. 
A \emph{zero forcing set} of $G$ is a set of vertices $S$ such that if we start with the vertices in $S$ colored, we can eventually color all vertices of $G$ by repeatedly applying the zero forcing rule. 
We sometimes simply say the set $S$ is forcing in $G$. 
If a vertex $v$ is used to color a vertex $w$, we say that $v$ \emph{forces} $w$.

\begin{figure}[H]
    \centering
    \begin{tikzpicture}[scale = 0.8]
        \node [circle, draw = black, fill = cyan] (a) at (0, 2) {};
        \node [circle, draw = black] (b) at (1, 3) {};
        \node [circle, draw = black] (c1) at (2, 2) {};
        \node [circle, draw = black, fill = cyan] (d) at (1, 1) {};

        \node [ultra thick, circle, draw = red!60, fill = cyan] (c2) at (3, 3) {};
        \node [circle, draw = black] (g) at (3, 1) {};

        \draw (a)--(b)--(c1)--(c2);
        \draw (g)--(c1)--(d)--(a);
        \draw (d) -- (b);

        \node at (4, 2) {$\rightarrow$};
    \end{tikzpicture}
    \hspace{0.1cm}
    \begin{tikzpicture}[scale = 0.8]
        \node [ultra thick, circle, draw = red!60, fill = cyan] (a) at (0, 2) {};
        \node [circle, draw = black] (b) at (1, 3) {};
        \node [circle, draw = black, fill = cyan] (c1) at (2, 2) {};
        \node [circle, draw = black, fill = cyan] (d) at (1, 1) {};

        \node [circle, draw = black, fill = cyan] (c2) at (3, 3) {};
        \node [circle, draw = black] (g) at (3, 1) {};

        \draw (a)--(b)--(c1)--(c2);
        \draw (g)--(c1)--(d)--(a);
        \draw (d) -- (b);

        \node at (4, 2) {$\rightarrow$};
    \end{tikzpicture}
    \hspace{0.1cm}
    \begin{tikzpicture}[scale = 0.8]
        \node [circle, draw = black, fill = cyan] (a) at (0, 2) {};
        \node [circle, draw = black, fill = cyan] (b) at (1, 3) {};
        \node [ultra thick, circle, draw = red!60, fill = cyan] (c1) at (2, 2) {};
        \node [circle, draw = black, fill = cyan] (d) at (1, 1) {};

        \node [circle, draw = black, fill = cyan] (c2) at (3, 3) {};
        \node [circle, draw = black] (g) at (3, 1) {};

        \draw (a)--(b)--(c1)--(c2);
        \draw (g)--(c1)--(d)--(a);
        \draw (d) -- (b);

        \node at (4, 2) {$\rightarrow$};
    \end{tikzpicture}
    \hspace{0.1cm}
    \begin{tikzpicture}[scale = 0.8]
        \node [circle, draw = black, fill = cyan] (a) at (0, 2) {};
        \node [circle, draw = black, fill = cyan] (b) at (1, 3) {};
        \node [circle, draw = black, fill = cyan] (c1) at (2, 2) {};
        \node [circle, draw = black, fill = cyan] (d) at (1, 1) {};

        \node [circle, draw = black, fill = cyan] (c2) at (3, 3) {};
        \node [circle, draw = black, fill = cyan] (g) at (3, 1) {};

        \draw (a)--(b)--(c1)--(c2);
        \draw (g)--(c1)--(d)--(a);
        \draw (d) -- (b);
    \end{tikzpicture}
    \caption{Zero forcing in a graph with the forcing vertex highlighted red at each step.}
\end{figure}
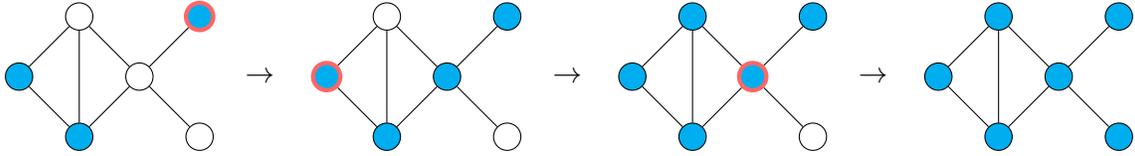

For a graph $G$ and $i \geq 1$, we set $Z(G; i)$ to be the collection of forcing sets of $G$ of cardinality $i$. 
Similarly, $Z'(G; i)$ is the set of non-forcing sets of $G$ of cardinality $i$. 
We also set $z(G; i) = \# Z(G; i)$ and $z'(G; i) = \# Z'(G; i)$. 
Note that $z(G; i) + z'(G; i)= \binom{n}{i}$ where $n$ is the number of vertices of $G$. 
The {\it zero forcing number} of $G$, denoted $z(G)$, is the minimum cardinality of a forcing set of $G$, i.e., $z(G) = \operatorname{min}\{i \mid z(G; i) \neq 0\}$.

\begin{figure}[H]
    \centering
    \begin{tikzpicture}
        \node [circle, draw = black, fill = cyan] (1) at (1, 0) {};
        \node [circle, draw = black, fill = cyan] (2) at (2, 0) {};
        \node [circle, draw = black] (3) at (3, 0) {};
        \node [circle, draw = black] (4) at (4, 0) {};
        \node [circle, draw = black, fill = cyan] (5) at (5, 0) {};
        \node [circle, draw = black] (6) at (6, 0) {};

        \node [circle, draw = black] (a) at (2.55, 0.75) {};
        \node [circle, draw = black, fill = cyan] (b) at (3.45, 0.75) {};

        \draw (1)--(2)--(3)--(4)--(5)--(6);
        \draw (a)--(3)--(b);
    \end{tikzpicture}
    \caption{A non-forcing set in a graph.}
\end{figure}
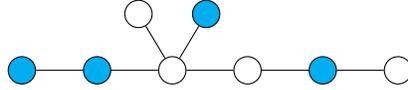

The study of zero forcing sets in graphs has garnered significant interest due to its role as a coloring process on graphs and its intriguing connections to various fields such as linear algebra \cite{ borio10, AIM, huang10}, power grid domination strategies \cite{powerdomi1, powerdomi2, powerdomi3}, theoretical computer science \cite{algo, yang13}, modeling various physical phenomena \cite{burg13, kforcing, dreyer09}, control of quantum systems \cite{quantum}, and rumor spreading models \cite{prob1}. Initially introduced by Burgarth and Giovannetti \cite{quantum}, the zero forcing process has since been examined extensively by researchers. The concept of zero forcing number, introduced by an AIM research group, serves as a bound for the maximum nullity of a graph \cite{AIM} and has connections with other graph parameters as well (see \cite{gc1, gc2, row11}).  The problem of zero forcing has also been studied in randomized setups \cite{sam24, prob1, prob2} and from a reconfiguration perspective \cite{geneson}.  A few other variants of zero forcing have also been well explored (see \cite{var1, var3, var4, var2,  var5}).

The counting problem associated with zero forcing sets was first explored by Boyer et al. in \cite{boyer19}. Here, the authors studied the numbers $z(G; k)$ for various classes of graphs, such as complete graphs, cycle graphs, threshold graphs, and more. 
However, the case of path graphs is of particular interest due to the following conjecture, which asserts that path graphs \emph{dominate} all other graphs in terms of number of forcing sets.

\begin{conjecture}{\cite[Question 2]{boyer19}}\label{conj}
    For any graph $G$ on $n$ vertices, $z(G; i) \leq z(P_n, i)$ for all $i \geq 1$ where $P_n$ is the path graph on $n$ vertices.
\end{conjecture}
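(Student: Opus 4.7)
The plan is to attempt the conjecture by a two-stage reduction that moves any graph $G$ towards the path $P_n$ while showing $z(\cdot;i)$ weakly increases: first delete edges until $G$ becomes a spanning tree, then ``straighten'' the tree until it becomes $P_n$. Both stages follow the same template: identify a local modification $G \rightsquigarrow G'$ on the same vertex set and prove $z(G;i) \le z(G';i)$ for all $i$. Since these inequalities compose, iterating the two stages finishes.

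Stage 1 (edge deletion). For a graph $G$ containing a cycle $C$, choose an edge $e = uv$ lying on $C$ and try to prove $z(G;i) \le z(G-e;i)$. A forcing set $S$ of $G$ is also forcing in $G-e$ unless every successful forcing chain on $(G,S)$ genuinely uses $e$ to have $u$ force $v$ (or vice versa). I would attempt an injection $\phi \colon Z(G;i) \hookrightarrow Z(G-e;i)$ that is the identity on sets which remain forcing in $G-e$, and for the remaining ``$e$-dependent'' sets performs a toggle along $C$: swap one canonically chosen vertex of $S$ with a non-element of $S$ on the cycle, producing a size-$i$ set that is forcing in $G-e$ yet carries enough information to recover $S$. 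The canonical choice should be determined by the last cycle-vertex colored in the reverse forcing process, which makes $\phi$ injective. Iterating such edge deletions reduces $G$ to a spanning tree $T$.

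Stage 2 (tree straightening). If $T$ is a tree but not a path, pick a branch vertex $v$ of degree $\ge 3$ closest to a leaf, with two pendant paths of lengths $p \le q$ incident to it. Build $T'$ by detaching the short branch from $v$ and attaching it to the far end of the long branch, decreasing the number of branch vertices by one. A direct forcing-set correspondence, built on the observation that forcing on a tree decomposes into path forcings synchronized at the branch vertices, should yield $z(T;i) \le z(T';i)$. Iterating straightens $T$ into $P_n$. This stage is the milder of the two and should fit into the kind of pendant-path operation framework developed elsewhere in the paper.

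The main obstacle is Stage 1. Edge deletion is genuinely delicate: there are graphs and edges for which $z(G;i) > z(G-e;i)$ for some $i$, so the choice of $e$ within a cycle is critical and the toggling above is only heuristic. Turning it into a rigorous argument requires a canonical encoding of forcing processes that survives the surgery on $e$ — exactly the ingredient that present techniques seem to lack. This is presumably why the conjecture remains open and why the body of the paper instead establishes the inequality for restricted classes such as outerplanar and threshold graphs, where extra structural control either trivializes Stage 1 or bypasses it with ad hoc arguments.
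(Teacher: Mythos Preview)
The statement is an open conjecture; the paper does not prove it in general, only for outerplanar and threshold graphs. So there is no ``paper's proof'' to compare to, and the right question is whether your two-stage plan could close the gap.

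Stage~2 is fine and is essentially what the paper proves as \Cref{hangingpaths}: if a tree has two paths hanging from a vertex, concatenating them weakly increases $z(\cdot;i)$ for every $i$. Iterating this straightens any tree into $P_n$, giving \Cref{treeresult}.

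Stage~1, however, is not just ``delicate'' --- it is provably impossible in the form you state it. Your plan would, after a sequence of single-edge deletions each weakly increasing $z(\cdot;i)$, arrive at \emph{some} spanning tree $T$ of $G$ with $z(G;i)\le z(T;i)$ for all $i$. The paper explicitly refutes this in Section~4: the graph in \Cref{spantree} (a triangle with one pendant leaf at each corner) has \emph{no} spanning tree $T$ satisfying $[G]\le[T]$. Consequently no choice of cycle edge $e$, no matter how cleverly made, can yield $z(G;i)\le z(G-e;i)$ for all $i$ at every step, because the composite inequality already fails at the end. Your toggling injection $\phi$ therefore cannot exist in general, and the obstruction is not a matter of finding the right canonical encoding.

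This is exactly why the paper restricts to outerplanar graphs: there the cycles that appear are \emph{hanging} cycles (all internal vertices of degree~$2$), and for those \Cref{hangingcycle} shows that deleting the base edge genuinely preserves all forcing sets, so Stage~1 goes through. For general graphs a different idea is needed.
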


There are relatively few results about graphs that satisfy this conjecture. 
It is known that graphs that have a Hamiltonian path or a fort of appropriate size (see \Cref{fort_def}) satisfy this conjecture \cite[Propositions 17, 18]{boyer19}. 
It is also known that graphs with a certain bound on the minimum degree of the vertices satisfy the conjecture (see \cite[Corollary 1.10]{sam24}). 
One of the principle aims of this paper is to expand the class of graphs known to satisfy \Cref{conj}.

We prove our results by studying various graph operations (such as deleting certain vertices or edges) and examining their effect on forcing sets. 
Using these operations, we obtain bounds on the numbers $z(G; i)$, which we then use to show that the graph $G$ satisfies \Cref{conj}.

In particular, we show that trees satisfy \Cref{conj}, which was a problem suggested by Sam Spiro \cite[Conjecture 3.7]{p2solve}. 
We in fact prove the following stronger result.

\begin{theorem}
    All outerplanar graphs (see \Cref{OP_def}) as well as all threshold graphs (see \Cref{thresh_def}) satisfy \Cref{conj}.
\end{theorem}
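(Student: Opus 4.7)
The plan is to handle the two classes separately, in each case exploiting an inductive structural description of the class and reducing to operational lemmas describing how $z(G;i)$ changes under vertex deletion. Throughout, the comparison target is the path $P_n$, so each recursion needs to be sharp enough that invoking the inductive hypothesis $z(G';i') \le z(P_{n-1};i')$ on the smaller graph still yields $z(G;i) \le z(P_n;i)$ on the original.

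For \textbf{threshold graphs}, I would use the standard recursive construction: any threshold graph $G$ on $n$ vertices is obtained from a threshold graph $G'$ on $n-1$ vertices either by adding an isolated vertex or by adding a dominating vertex. Each of these two moves puts the new vertex in a highly symmetric position, so one can express $z(G;i)$ explicitly as a combination of $z(G';i)$ and $z(G';i-1)$ (with extra care for the isolated-vertex case, where the new vertex must always be colored). Combining these recursions with the analogous recursion for $z(P_n;i)$ should give $z(G;i) \le z(P_n;i)$ by induction on $n$, with only small base cases to verify directly.

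For \textbf{outerplanar graphs}, I would exploit the classical fact that every outerplanar graph has a vertex $v$ of degree at most $2$. If $v$ is a leaf, forcing sets split cleanly according to whether they contain $v$, and $z(G;i)$ is bounded by $z(G-v;i) + z(G-v;i-1)$; since $G-v$ is still outerplanar, induction applies, and this case already resolves Spiro's conjecture for trees. If $G$ has no leaf, then $v$ has degree exactly $2$ with neighbors $u, w$; setting $G' = G - v$ (and adding the edge $uw$ if it is missing, so $G'$ remains outerplanar), the key operational lemma to prove is a bound of the form
\[
z(G;i) \le z(G';i) + z(G';i-1),
\]
or a refined version splitting on whether $uw$ was already an edge. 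Since $z(P_n;i) = z(P_{n-1};i) + z(P_{n-1};i-1)$ (the terminal vertex plays the role of $v$), this closes the induction.

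The \emph{main obstacle} is the degree-$2$ case for outerplanar graphs. When $v$ lies on an induced cycle, deleting $v$ can destroy the propagation mechanism that certain forcing sets of $G$ relied on, so a naive injection $Z(G;i) \hookrightarrow Z(G';i) \cup Z(G';i-1)$ need not exist. The delicate part will be showing that such ``bad'' sets either are sparse enough to be absorbed by the slack in the path count, or can be charged (with bounded multiplicity) to elements of $Z(G';\cdot)$; I expect this to require a careful case analysis on the colored/uncolored status of $\{v,u,w\}$, on whether $uw$ is an edge in $G$, and on how $v$'s neighbors can force after $v$ is colored or removed.
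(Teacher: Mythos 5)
Your treatment of threshold graphs is essentially the paper's argument: the paper proves that if $G$ (with no isolated vertices) satisfies \Cref{conj} then so does the cone $c(G)$, by observing that every non-forcing set $S$ of $G$ stays non-forcing in $c(G)$ both as $S$ and as $S\cup\{v\}$, giving $z'(c(G);i)\ge z'(G;i)+z'(G;i-1)$; isolated vertices are handled via the leaf lemma and forts, and induction on the recursive construction closes the case. That half of your plan is sound.

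The outerplanar half has a genuine gap, and the key lemma you propose is false as stated. Take $G=C_5$ and let $v$ be any (degree-$2$) vertex with neighbors $u,w$; since $uw$ is not an edge, your $G'$ is $C_4$. Then $z(C_5;2)=5$ (exactly the five adjacent pairs are forcing), while $z(C_4;2)+z(C_4;1)=4+0=4$, so $z(G;2)\not\le z(G';2)+z(G';1)$. You anticipate that ``bad'' sets must be absorbed by the slack $z(P_n;i)-z(P_{n-1};i)-z(P_{n-1};i-1)=\binom{n-i-2}{i-2}$, but you give no mechanism for this charging, and it is exactly where the difficulty lies: a lone degree-$2$ vertex on a cycle whose other vertices have high degree gives no control over how forcing chains reroute after the smoothing. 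The paper sidesteps this entirely by working with a different structure: a \emph{hanging cycle}, i.e.\ a cycle $v,a_1,\dots,a_k,w,v$ in which \emph{every} internal vertex $a_j$ has degree $2$. Deleting the single edge $\{v,w\}$ (a move that keeps the vertex set fixed, so the induction runs on $|V|+|E|$ rather than on $n$) yields the clean containment $Z(G;i)\subseteq Z(G';i)$: any forcing step across $\{v,w\}$ can be replaced by propagating the coloring around the path $a_1,\dots,a_k$, precisely because all the $a_j$ have degree $2$. The structural input is then that a connected outerplanar graph with no leaf always has a hanging cycle (found inside a leaf block of the block--cut tree, using that biconnected outerplanar graphs are cycles with non-crossing chords). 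To repair your proof you would either need to establish and exploit the refined charging argument you allude to, or replace the degree-$\le 2$ vertex reduction with an edge-deletion reduction of this hanging-cycle type.
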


We also show that wedges of such graphs satisfy \Cref{conj} (see \Cref{TOPW}).

\section{Preliminaries}

A \emph{graph} $G$ is an ordered pair of sets $(V(G), E(G))$ where $V(G)$ is called the set of \emph{vertices} and $E(G)$ is the set of \emph{edges}. 
Each element of $E(G)$ is a subset of $V(G)$ of size $2$. 
We say that two vertices $v, w$ are \emph{adjacent} if $\{v, w\} \in E(G)$. 
For any vertex $v \in V(G)$, we define its \emph{neighborhood} to be the set $N(v) := \{w \in V(G) \mid \{v, w\} \in E(G)\}$. 
The number of vertices adjacent to $v$ (i.e., $\# N(v)$) is called the \emph{degree} of $v$.

For any positive integer $n$, we use $[n]$ to denote the set $\{1, 2, \ldots, n\}$. 
Set $P_n$ to be the path graph on $n$ vertices, i.e., $V(P_n) = [n]$ and $E(P_n) = \{\{k, k + 1\} \mid k \in [n - 1]\}$. 
The \emph{cycle graph} on $n$ vertices $C_n$ is the graph obtained from $P_n$ by adding the edge $\{1, n\}$.

A \emph{subgraph} $H$ of a graph $G$ is a graph such that $V(H) \subseteq V(G)$ and $E(H) \subseteq E(G)$. 
We sometimes simply say that $G$ \emph{contains} $H$. 
For any set of vertices $W \subseteq V(G)$, we denote by $G \setminus W$ the subgraph of $G$ with vertex set $V(G) \setminus W$ and edge set $\{\{v, w\} \in E(G) \mid v, w \notin W\}$. 
We say that we have \emph{deleted} the vertices in $W$. 
A graph is said to be \emph{connected} if for any two vertices $v, w$, the graph contains a path whose endpoints are $v$ and $w$. 
The \emph{connected components} of a graph are its maximal connected subgraphs.

We now recall a few results from \cite{boyer19}. 
First, we describe the forcing sets in path graphs.

\begin{result}{\cite[Proposition 5]{boyer19}}\label{path_result}
    For any $n \geq 1$, we have $S \subseteq [n]$ is non-forcing in $P_n$ if and only if $1, n \notin S$ and $S$ does not contain consecutive numbers. 
    Hence, for any $i \geq 1$, $$z'(P_n; i) = \binom{n - i - 1}{i} \text{ and } z(P_n; i) = \binom{n}{i} - \binom{n - i - 1}{i}.$$
\end{result}

Before moving ahead, we recall the following notion, which helps in identifying certain graphs satisfying \Cref{conj}.

\begin{definition}\label{fort_def}
    A \emph{fort} in a graph $G$ is a non-empty set of vertices $F$ such that no vertex outside $F$ is adjacent to exactly one vertex in $F$. 
    That is, for any vertex $v \notin F$, we have $\#(N(v) \cap F) \neq 1$.
\end{definition}

Note that if $S \subset V(G)$ is such that $S \cap F = \phi$ for a fort $F$, then $S$ is a non-forcing set of $G$. 
This is because no vertex in $F$ can be colored. 
Using this fact, one can prove the following result.

\begin{result}{\cite[Proposition 17]{boyer19}}\label{fort}
    If a graph $G$ has a fort $F$ such that $\# F \leq z(G) + 1$, then $G$ satisfies \Cref{conj}.
\end{result}

We also have the following result for the \emph{union} of two graphs, which can be derived from \cite[Proposition 13]{boyer19}. 
For two graphs $G$ and $H$ (with disjoint vertex sets), we define their union $G \sqcup H$ to be the graph with vertex set $V(G) \cup V(H)$ and edge set $E(G) \cup E(H)$.

\begin{result}\label{union}
    If $G$ and $H$ satisfy \Cref{conj}, then so does $G \sqcup H$.
\end{result}

\begin{proof}
    It can be checked that it is enough to prove the result for when $G$ and $H$ are both path graphs. 
    Let $m, n \geq 1$. 
    Consider the path graph $P_{m +n}$ whose vertex set is $[m +n]$ and consider the subgraph obtained by deleting the edge $\{m, m + 1\}$. 
    This subgraph is clearly the same as $P_m \sqcup P_n$. 
    It is easy to see that any forcing set in $P_m \sqcup P_n$ is also forcing in $P_{m + n}$ and hence $z(P_m \sqcup P_n; i) \leq z(P_{m + n}; i)$ for all $i \geq 1$.
\end{proof}

\section{Bounds via graph operations}\label{results_sec}

In this section, we consider various operations on graphs such as deleting or adding vertices and edges, and examine how they affect the number of zero forcing sets. 
We then use these results to show that various classes of graphs satisfy \Cref{conj}.

\subsection{Leaves and hanging cycles}

A \emph{leaf} in a graph is a vertex of degree $1$. 
We have the following result for graphs that have a leaf.

\begin{lemma}\label{leafdel}
    Let $G$ be a graph and $x$ be a leaf that is adjacent to a vertex $v$. 
    For any $i \geq 1$, we have $$z'(G; i) \geq z'(G \setminus \{x\}; i) + z'(G \setminus \{x, v\}; i - 1).$$
    In particular, if $G \setminus \{x\}$ and $G \setminus \{x,v\}$ both satisfy \Cref{conj}, then so does $G$.
\end{lemma}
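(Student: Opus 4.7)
The plan is to prove the inequality by constructing an injection from the disjoint union $Z'(G \setminus \{x\}; i) \sqcup Z'(G \setminus \{x, v\}; i - 1)$ into $Z'(G; i)$, and then to deduce the conjecture conclusion via Pascal's identity applied to the path formula from \Cref{path_result}.

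First I would collect three facts that exploit $x$ being a leaf adjacent to $v$. (A) If $x \notin S$, then the closure of $S$ in $G$, restricted to $V(G) \setminus \{x\}$, is contained in the closure of $S$ in $G \setminus \{x\}$: while $x$ is uncolored, $v$ cannot force any vertex other than $x$ in $G$ (any other would-be forcee would leave $x$ as a second uncolored neighbor), so an induction on the forcing step confirms that every vertex of $V(G) \setminus \{x\}$ colored in $G$ is also colored in $G \setminus \{x\}$; in particular, $S$ non-forcing in $G \setminus \{x\}$ would imply $S$ non-forcing in $G$. (B) For any $T \subseteq V(G) \setminus \{x\}$, the set $\{x\} \cup T$ is forcing in $G$ iff $\{v\} \cup T$ is forcing in $G \setminus \{x\}$, since $x$ forces $v$ immediately (if $v$ is not already colored) and then plays no further role. (C) If $T$ is non-forcing in $G'' := G \setminus \{x, v\}$, then $T \cup \{v\}$ is non-forcing in $G$: starting from $T \cup \{v\}$, the vertex $v$ is useless until $x$ becomes colored, which only $v$ itself can do, and this requires $N_{G \setminus \{x\}}(v)$ to be fully colored first; a short case split on whether $v$ ever forces $x$ would show that the closure of $T \cup \{v\}$ in $G$ is contained in $\mathrm{cl}_{G''}(T) \cup \{v, x\}$, and hence misses some vertex of $V(G'')$.

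With these in hand, I would define $\phi(S) = S$ from $Z'(G \setminus \{x\}; i)$ into $Z'(G; i)$ (well-defined by (A)), and
\[
\psi(T) = \begin{cases} T \cup \{x\} & \text{if } T \cup \{v\} \text{ is non-forcing in } G \setminus \{x\}, \\ T \cup \{v\} & \text{if } T \cup \{v\} \text{ is forcing in } G \setminus \{x\}, \end{cases}
\]
from $Z'(G \setminus \{x, v\}; i - 1)$ into $Z'(G; i)$. The first branch would lie in $Z'(G; i)$ by the contrapositive of (B), and the second by (C). The two branches have disjoint images (one contains $x$, the other does not), and both are disjoint from the image of $\phi$: the first branch because $\phi$-images do not contain $x$, and the second because its elements are forcing in $G \setminus \{x\}$ by construction and hence not in $Z'(G \setminus \{x\}; i)$. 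This would yield the inequality $z'(G; i) \geq z'(G \setminus \{x\}; i) + z'(G \setminus \{x, v\}; i - 1)$.

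For the ``in particular'' claim, \Cref{path_result} gives $z'(P_n; i) = \binom{n - i - 1}{i}$, and Pascal's identity $\binom{n - i - 1}{i} = \binom{n - i - 2}{i} + \binom{n - i - 2}{i - 1}$ rewrites this as $z'(P_n; i) = z'(P_{n-1}; i) + z'(P_{n-2}; i - 1)$. Feeding the hypotheses $z'(G \setminus \{x\}; i) \geq z'(P_{n-1}; i)$ and $z'(G \setminus \{x, v\}; i - 1) \geq z'(P_{n-2}; i - 1)$ (the latter being trivial when $i = 1$) into the established inequality would give $z'(G; i) \geq z'(P_n; i)$, equivalently $z(G; i) \leq z(P_n; i)$, so $G$ would satisfy \Cref{conj}. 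The main obstacle will be designing $\psi$: the naive single-case maps $T \mapsto T \cup \{x\}$ and $T \mapsto T \cup \{v\}$ each fail (the first can land outside $Z'(G; i)$, the second can collide with the image of $\phi$), and the dichotomy on whether $T \cup \{v\}$ is forcing in $G \setminus \{x\}$ is what makes the two failure modes mutually exclusive, allowing both maps to be combined into a single injection.
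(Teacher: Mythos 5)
Your proposal is correct, and it follows the same overall strategy as the paper: build an injection from $Z'(G\setminus\{x\};i)\sqcup Z'(G\setminus\{x,v\};i-1)$ into $Z'(G;i)$, then close with Pascal's identity on $\binom{n-i-1}{i}$. The only real difference is where the case analysis lives in the injection. The paper modifies the \emph{first} collection: for $S\in Z'(G\setminus\{x\};i)$ with $v\in S$ it sends $S\mapsto S\setminus\{v\}\cup\{x\}$ (so its three image classes are separated purely by membership of $x$ and $v$), and then maps every $T\in Z'(G\setminus\{x,v\};i-1)$ to $T\cup\{v\}$ unconditionally. You instead keep the first map as the identity (which forces you to justify the extra fact (A) that $S$ stays non-forcing in $G$ even when $v\in S$ --- this is true, for the reason you give) and push the case split onto the \emph{second} collection via the dichotomy on whether $T\cup\{v\}$ forces $G\setminus\{x\}$; disjointness of images is then certified partly by forcing status rather than purely by which of $x,v$ the set contains. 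Both resolutions of the collision between "$S$ containing $v$" and "$T\cup\{v\}$" are valid; the paper's swap is slightly more economical to verify, while your version isolates cleanly the three reusable facts (A), (B), (C) about leaves. Your facts (A)--(C) and the Pascal computation all check out.
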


\begin{proof}
We prove the result by describing an injection $$\iota : Z'(G \setminus \{x\}; i) \cup Z'(G \setminus \{x, v\}; i - 1) \hookrightarrow Z'(G, i).$$

Let $S \in Z'(G \setminus \{x\}; i)$. 
If $v \in S$, we set $\iota(S) = S \setminus \{v\} \cup \{x\}$. 
The fact that $\iota(S) \in Z'(G; i)$ follows because the only vertex that $x$ can force is $v$. 
If $v \notin S$, we set $\iota(S) = S$, which is non-forcing in $G$ as well.

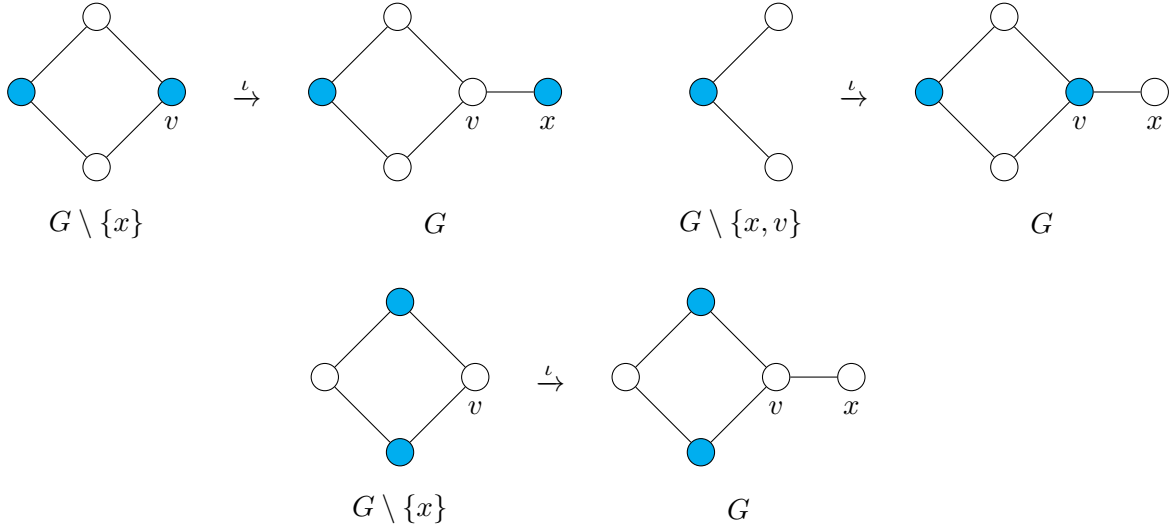
\begin{figure}[H]
    \centering
    \begin{tikzpicture}
        \node[circle, draw = black, fill = cyan, label = below: {$v$}] (v) at (3, 2) {};
        \node[circle, draw = black, fill = cyan] (a) at (1, 2) {};
        \node[circle, draw = black] (b) at (2, 1) {};
        \node[circle, draw = black] (c) at (2, 3) {};

        \draw (a) -- (c) -- (v) -- (b) -- (a);

        \node at (2, 0.25) {$G \setminus \{x\}$};

        \node at (4, 2) {$\xrightarrow{\iota}$};

        \node[circle, draw = black, label = below: {$v$}] (v1) at (3 + 4, 2) {};
        \node[circle, draw = black, fill = cyan] (a1) at (1 + 4, 2) {};
        \node[circle, draw = black] (b1) at (2 + 4, 1) {};
        \node[circle, draw = black] (c1) at (2 + 4, 3) {};
        \node[circle, draw = black, fill = cyan, label = below: {$x$}] (x) at (4 + 4, 2) {};

        \draw (a1) -- (c1) -- (v1) -- (b1) -- (a1);
        \draw (v1) -- (x);

        \node at (6.5, 0.25) {$G$};
    \end{tikzpicture}
    \hfill
    \begin{tikzpicture}
        \node[circle, draw = black, fill = cyan] (a) at (1, 2) {};
        \node[circle, draw = black] (b) at (2, 1) {};
        \node[circle, draw = black] (c) at (2, 3) {};

        \draw (a) -- (c);
        \draw (b) -- (a);

        \node at (1.5, 0.25) {$G \setminus \{x, v\}$};

        \node at (3, 2) {$\xrightarrow{\iota}$};

        \node[circle, draw = black, label = below: {$v$}, fill = cyan] (v1) at (3 + 3, 2) {};
        \node[circle, draw = black, fill = cyan] (a1) at (1 + 3, 2) {};
        \node[circle, draw = black] (b1) at (2 + 3, 1) {};
        \node[circle, draw = black] (c1) at (2 + 3, 3) {};
        \node[circle, draw = black, label = below: {$x$}] (x) at (4 + 3, 2) {};

        \draw (a1) -- (c1) -- (v1) -- (b1) -- (a1);
        \draw (v1) -- (x);

        \node at (5.5, 0.25) {$G$};
    \end{tikzpicture}
    
    \vspace{0.5cm}
    
    \begin{tikzpicture}
        \node[circle, draw = black, label = below: {$v$}] (v) at (3, 2) {};
        \node[circle, draw = black] (a) at (1, 2) {};
        \node[circle, draw = black, fill = cyan] (b) at (2, 1) {};
        \node[circle, draw = black, fill = cyan] (c) at (2, 3) {};

        \draw (a) -- (c) -- (v) -- (b) -- (a);

        \node at (2, 0.25) {$G \setminus \{x\}$};

        \node at (4, 2) {$\xrightarrow{\iota}$};

        \node[circle, draw = black, label = below: {$v$}] (v1) at (3 + 4, 2) {};
        \node[circle, draw = black] (a1) at (1 + 4, 2) {};
        \node[circle, draw = black, fill = cyan] (b1) at (2 + 4, 1) {};
        \node[circle, draw = black, fill = cyan] (c1) at (2 + 4, 3) {};
        \node[circle, draw = black, label = below: {$x$}] (x) at (4 + 4, 2) {};

        \draw (a1) -- (c1) -- (v1) -- (b1) -- (a1);
        \draw (v1) -- (x);

        \node at (6.5, 0.25) {$G$};
    \end{tikzpicture}
    \caption{Instances of the injection $\iota$.}
\end{figure}

Let $S \in Z'(G \setminus \{x, v\}; i - 1)$. 
We set $\iota(S) = S \cup \{v\}$. 
Since $x \notin \iota(S)$, the vertex $v$ can, at most, be used to color $x$. 
But in this case, all neighbors of $v$ (apart from $x$) have to already be colored. 
Hence, the vertex $v$ cannot be used to color more vertices in $G \setminus \{x, v\}$ than $S$ can. 
This shows that $\iota(S) \in Z'(G; i)$.

It can be verified that $\iota$ is an injection. 
Now suppose that $G$ has $n$ vertices and $G \setminus \{x\}$ and $G \setminus \{x, v\}$ satisfy \Cref{conj}. 
We have for any $i \geq 1$,
\begin{align*}
    z'(G; i) &\geq z'(G \setminus \{x\}; i) + z'(G \setminus \{x, v\}; i - 1)\\
    &\geq z'(P_{n - 1}; i) + z'(P_{n - 2}; i - 1)\\
    &= \binom{(n - 1) - i - 1}{i} + \binom{(n - 2) - (i - 1) - 1}{i - 1} = \binom{n - i - 1}{i} = z'(P_n; i).
\end{align*}
Hence, $G$ satisfies \Cref{conj}.
\end{proof}

Recall that a \emph{tree} is a connected graph that contains no cycles. 
Since any tree has a leaf, induction on the number of vertices in a tree (along with \Cref{union}) gives us the following result.

\begin{corollary}\label{treeresult}
    For any tree $T$ on $n$ vertices, $z(T;i) \leq z(P_n;i)$ for all $i \geq 1$.
\end{corollary}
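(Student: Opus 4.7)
The plan is to induct on the number of vertices $n$, using \Cref{leafdel} for the inductive step, exactly as hinted at in the sentence preceding the statement. The proof is almost immediate once one notices that deleting a leaf gives a smaller tree while deleting the leaf together with its neighbor gives a smaller forest, and \Cref{union} lets us handle forests.

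For the base case, I would take $n = 1$, where the only tree is $P_1$ itself and the conclusion is trivial. For the inductive step, I would assume the result holds for every tree (and hence, via \Cref{union}, every forest) on fewer than $n$ vertices, and let $T$ be a tree on $n \geq 2$ vertices. Since $T$ is a finite tree, it contains at least one leaf $x$; let $v$ be its unique neighbor. Consider the two graphs $T \setminus \{x\}$ and $T \setminus \{x, v\}$.

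The first, $T \setminus \{x\}$, is still a tree (a connected acyclic graph on $n - 1$ vertices), so by the induction hypothesis it satisfies \Cref{conj}. The second, $T \setminus \{x, v\}$, is a forest on $n - 2$ vertices, i.e., a disjoint union of trees on strictly fewer than $n$ vertices; by the induction hypothesis each connected component satisfies \Cref{conj}, and then repeatedly applying \Cref{union} shows that their disjoint union $T \setminus \{x, v\}$ satisfies \Cref{conj} as well. Now \Cref{leafdel}, applied to the leaf $x$ of $T$, immediately yields that $T$ satisfies \Cref{conj}, completing the induction.

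There is essentially no obstacle: all of the work has been packaged into \Cref{leafdel} and \Cref{union}. The only mild subtlety is remembering that $T \setminus \{x, v\}$ need not be a tree (for instance if $v$ had degree greater than $1$ in $T$), which is precisely why \Cref{union} is invoked; the inductive hypothesis should therefore be phrased so as to cover forests, either by stating it for forests from the start or by deriving the forest version as a one-line consequence of \Cref{union} and the tree version within the proof.
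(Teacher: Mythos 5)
Your proof is correct and matches the paper's argument exactly: the paper also derives this by induction on the number of vertices, deleting a leaf and applying \Cref{leafdel} together with \Cref{union} to handle the forest $T \setminus \{x, v\}$. Your write-up simply makes explicit the details the paper leaves to the reader.
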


The above corollary is a strengthening of \cite[Theorem 1.6]{sam24} and also proves a conjecture in Sam Spiro's list of open problems \cite[Conjecture 3.7]{p2solve}.

We now consider deletion of an edge in certain special cycles in a graph. 
We say that a graph has a \emph{hanging cycle} on an edge $e = \{v, w\}$ if it has a cycle $v, a_1, \ldots, a_k, w, v$ such that for all $i \in [k]$, the vertex $a_i$ has degree $2$. 
There is no constraint on the degree of the vertices $v$ and $w$.

\begin{lemma}\label{hangingcycle}
    Let $G$ be a graph with a hanging cycle on an edge $e$. 
    Let $G'$ be the graph obtained from $G$ by removing the edge $e$. 
    We have $z(G;i)\leq z(G';i)$ for all $i \geq 1$. 
    In particular, if $G'$ satisfies \Cref{conj}, so does $G$.
\end{lemma}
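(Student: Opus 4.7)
The plan is to prove the stronger statement that every forcing set $S$ of $G$ is also a forcing set of $G'$; the identity inclusion $Z(G;i)\hookrightarrow Z(G';i)$ then gives $z(G;i)\le z(G';i)$, and since $G$ and $G'$ share the same vertex set, the ``In particular'' clause follows.

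Fix a forcing set $S$ of $G$ and suppose for contradiction that $S$ is not forcing in $G'$. Let $F\subseteq V(G)$ be the set of vertices still uncolored when the zero forcing process in $G'$ starting from $S$ terminates. By the stopping condition, $F$ is non-empty, disjoint from $S$, and no vertex outside $F$ has exactly one $G'$-neighbor in $F$; that is, $F$ is a fort in $G'$. The strategy is to show that $F$ is \emph{also} a fort in $G$, which by the remark following \Cref{fort_def} would contradict $S$ being forcing in $G$.

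Since $G$ and $G'$ differ only in the edge $e=\{v,w\}$, the fort condition at any $u\notin\{v,w\}$ is automatically unchanged. A short case check on whether $v,w\in F$ shows that when both lie in $F$ or both lie outside $F$, $F$ remains a fort in $G$. The only remaining case (up to symmetry) is $v\notin F$ and $w\in F$; here the fort condition at $v$ in $G$ reads $|N_{G'}(v)\cap F|+1\ne 1$, and since $|N_{G'}(v)\cap F|\ne 1$ by the $G'$-fort condition, the only way it can fail is $N_{G'}(v)\cap F=\emptyset$.

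In this last scenario $a_1\notin F$, and the hanging-cycle structure becomes decisive: each $a_i$ has degree exactly $2$ in $G'$ with neighbors $a_{i-1}$ and $a_{i+1}$ (setting $a_0=v$ and $a_{k+1}=w$), so the fort condition at any $a_i\notin F$ forces $|\{a_{i-1},a_{i+1}\}\cap F|\in\{0,2\}$. Starting from $a_0,a_1\notin F$, a one-line induction yields $a_i\notin F$ for all $i\in\{0,\dots,k\}$, and the fort condition at $a_k$ then demands $|\{a_{k-1},w\}\cap F|\ne 1$; but $a_{k-1}\notin F$ and $w\in F$ together give exactly $1$, the desired contradiction. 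The main subtlety, and the step I would flag, is precisely this case split: three of the four possibilities for how $v,w$ relate to $F$ are immediate, and the hanging cycle is exactly the structural feature needed to handle the fourth.
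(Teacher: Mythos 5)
Your proof is correct, but it takes a genuinely different route from the paper's. The paper argues directly on the forcing chronology: if the deleted edge $e=\{v,w\}$ is ever used, say by $v$ to force $w$, then at that moment $a_1$ is already colored, so in $G'$ one instead forces along the cycle $a_1\to a_2\to\cdots\to a_k\to w$; hence $Z(G;i)\subseteq Z(G';i)$. You reach the same containment by duality with forts: the stalled set of a non-forcing $S$ in $G'$ is a fort of $G'$ disjoint from $S$, and your case analysis shows that every fort of $G'$ is still a fort of $G$ (the only delicate case, $v\notin F$, $w\in F$, $N_{G'}(v)\cap F=\emptyset$, is killed by propagating $a_i\notin F$ along the degree-$2$ path and contradicting the fort condition at $a_k$). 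Both arguments are complete; note only that you use the converse of the remark following \Cref{fort_def} (a stalled set is a fort), which the paper does not state explicitly but which you correctly justify inline. Your version has the advantage of avoiding any reasoning about the order of forcings and of isolating a cleaner structural fact --- deleting $e$ cannot create new forts when a cycle hangs on $e$ --- which would transfer to any edge deletion with that fort-preservation property; the paper's version is more constructive, exhibiting an explicit forcing process for $S$ in $G'$.
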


\begin{proof}
    We will in fact show that $Z(G; i) \subseteq Z(G'; i)$ for all $i \geq 1$. 
    Let $S$ be a forcing set in $G$. 
    We have to show that $S$ is forcing in $G'$ as well.
    
    Suppose $e = \{v, w\}$ and $v, a_1, a_2, \ldots, a_k, w$ is the cycle hanging from the edge $e$. 
    If $S$ can color $G$ without using the edge $e$ to force a vertex, then it is a forcing set in $G'$ as well. 
    Suppose we use the edge $e$ in the coloring. 
    Note that the edge $e$ only plays a role in the forcing if one of $v$ or $w$ is used to force the other.

    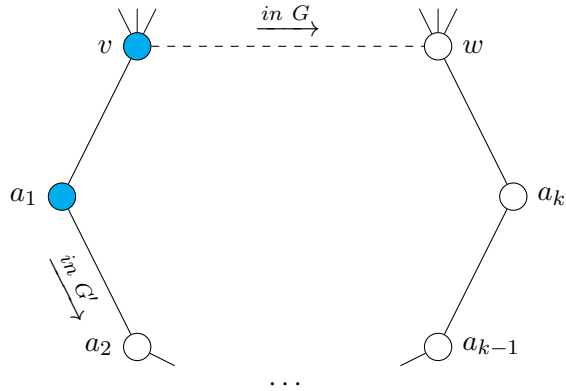
\begin{figure}[H]
        \centering
        \begin{tikzpicture}
            \node[circle, draw = black, fill = cyan, label = left: {$v$}] (v) at (2, 6) {};
            \node[circle, draw = black, fill = cyan, label = left: {$a_1$}] (a1) at (1, 4) {};
            \node[circle, draw = black, label = left: {$a_2$}] (a2) at (2, 2) {};
            \draw (v) -- (a1);
            \draw (a1) -- (a2) node [midway, sloped, below] {$\xrightarrow{in\ G'}$};
            \draw (a2) -- (2.5, 1.75);

            \node at (4, 1.5) {$\cdots$};

            \node[circle, draw = black, label = right: {$a_{k - 1}$}] (ak1) at (6, 2) {};
            \node[circle, draw = black, label = right: {$a_{k}$}] (ak) at (7, 4) {};
            \node[circle, draw = black, label = right: {$w$}] (w) at (6, 6) {};
            \draw (ak1) -- (ak) -- (w);
            \draw (ak1) -- (5.5, 1.75);

            \draw[dashed] (v) -- (w) node [midway, above] {$\xrightarrow{in\ G}$};
            \draw[ultra thin] (v) -- (1.75, 6.5);
            \draw[ultra thin] (v) -- (2, 6.5);
            \draw[ultra thin] (v) -- (2.25, 6.5);
            \draw[ultra thin] (w) -- (4+1.75, 6.5);
            \draw[ultra thin] (w) -- (4+2, 6.5);
            \draw[ultra thin] (w) -- (4+2.25, 6.5);
        \end{tikzpicture}
        \caption{Forcing sets remain forcing after deleting $e$.}
    \end{figure}
    
    Without loss of generality, suppose $v$ and all its neighbors in $G$ apart from $w$ are colored at some point in the forcing. 
    Then we can use $e$ to color $w$ in $G$. 
    However, in $G'$, we can use $a_1$ (which is colored since it is a neighbor of $v$) to color $a_2$, which colors $a_3$, and so on. 
    Finally, coloring along the cycle, we can color $w$. 
    Hence, $S$ is a forcing set in $G'$ as well.
\end{proof}

The lemmas above allow us to prove that certain planar graphs satisfy \Cref{conj}. 
Recall that a planar graph is one that can be drawn on the plane with no edges crossing.

\begin{definition}\label{OP_def}
    An \emph{outerplanar graph} is a planar graph which can be drawn on the plane in such a way that all the vertices lie on the outer (unbounded) face (see \cite{outerplanar} for more details).
\end{definition}

For example, the graph in \Cref{opgraph} is outerplanar whereas the complete graph $K_4$, although planar, is not outerplanar. 
In particular, all trees are outerplanar.

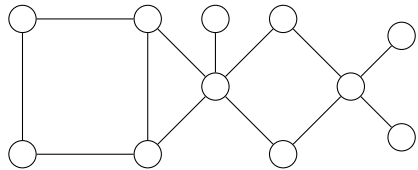
\begin{figure}[H]
    \centering
    \begin{tikzpicture}[scale=0.9]
        \node [circle, draw = black] (a) at (0.15, 0) {};
        \node [circle, draw = black] (b) at (2, 0) {};
        \node [circle, draw = black] (c) at (4, 0) {};
        \node [circle, draw = black] (d) at (3, 1) {};
        \node [circle, draw = black] (e) at (0.15, 2) {};
        \node [circle, draw = black] (f) at (2, 2) {};
        \node [circle, draw = black] (g) at (4, 2) {};
        \node [circle, draw = black] (h) at (5, 1) {};
        \node [circle, draw = black] (i) at (3, 2) {};
        \node [circle, draw = black] (j) at (5.75, 1.75) {};
        \node [circle, draw = black] (k) at (5.75, 0.25) {};

        \draw (d) -- (f) -- (e) -- (a) -- (b) -- (d) -- (c) -- (h) -- (g) -- (d) -- (i);
        \draw (b) -- (f);
        \draw (j) -- (h) -- (k);
    \end{tikzpicture}
    \caption{An outerplanar graph.}
    \label{opgraph}
\end{figure}

To show that all outerplanar graphs satisfy \Cref{conj}, we use the fact that any outerplanar graph has either a leaf or a hanging cycle. 
To prove this, we first need a few definitions.

A \emph{cut vertex} of a connected graph is a vertex whose deletion results in a disconnected graph. 
A graph is said to be \emph{biconnected} (or $2$-connected) if it is connected and has no cut vertices. 
The maximal biconnected subgraphs of a connected graph are called its \emph{blocks}. 
Cut vertices are precisely those vertices that lie in more than one block.

Biconnected outerplanar graphs are relatively simple to describe. 
Such graphs with at most two vertices are trivial to characterize. 
Biconnected outerplanar graphs with at least three vertices are precisely those graphs obtained from cycle graphs by adding any number of non-crossing edges between the vertices (see \cite{outerplanar}).

\begin{figure}[H]
    \centering
    \begin{tikzpicture}[scale=0.9]
    \foreach \a in {1,...,9}
    {
        \node[circle, draw = black] (\a) at ({\a*40}:2){};
    }
    \draw (1)--(2)--(3)--(4)--(5)--(6)--(7)--(8)--(9)--(1);
    \draw (1)--(3);
    \draw (9)--(4)--(7);
    \end{tikzpicture}
    \caption{A biconnected outerplanar graph.}
\end{figure}
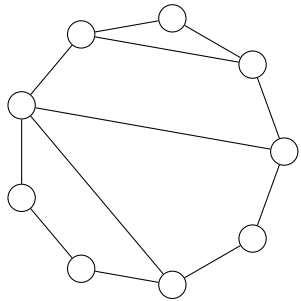

We will need another definition before proving that outerplanar graphs satisfy \Cref{conj}. 
The \emph{block-cut tree} of a connected graph $G$ is a tree whose vertices correspond to the blocks and cut vertices of $G$. 
There is an edge between a block and a cut vertex if the cut vertex lies in the block. 
It can be verified that the graph defined does indeed form a tree.

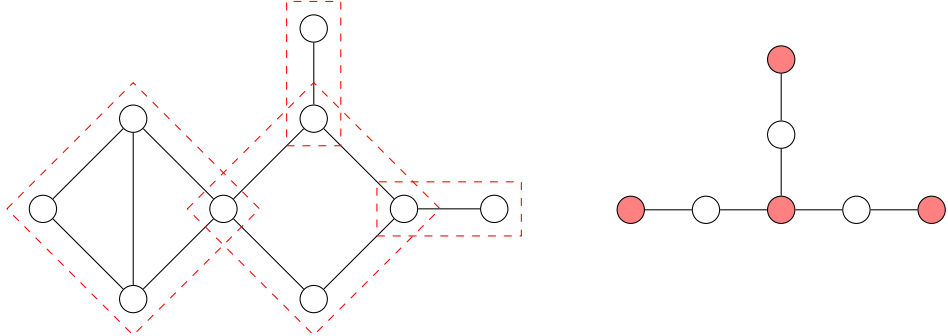
\begin{figure}[H]
    \centering
    \begin{tikzpicture}[scale = 1.2]
        \node [circle, draw = black] (a) at (0, 2) {};
        \node [circle, draw = black] (b) at (1, 3) {};
        \node [circle, draw = black] (c1) at (2, 2) {};
        \node [circle, draw = black] (d) at (1, 1) {};

        \node [circle, draw = black] (c2) at (3, 3) {};
        \node [circle, draw = black] (e) at (3, 4) {};
        \node [circle, draw = black] (c3) at (4, 2) {};
        \node [circle, draw = black] (g) at (3, 1) {};

        \node [circle, draw = black] (f) at (5, 2) {};

        \draw (a)--(b)--(c1)--(c2)--(c3)--(g)--(c1)--(d)--(a);
        \draw (d) -- (b);
        \draw (c2) -- (e);
        \draw (c3) -- (f);

        \draw [red, dashed, thin] (-0.4, 2) -- (1, 3.4) -- (2.4, 2) -- (1, 0.6) -- cycle;
        \draw [red, dashed, thin] (-0.4 + 2, 2) -- (1 + 2, 3.4) -- (2.4 + 2, 2) -- (1 + 2, 0.6) -- cycle;
        \draw [red, dashed, thin] (2.7, 2.7) rectangle (3.3, 4.3);
        \draw [red, dashed, thin] (3.7, 1.7) rectangle (5.3, 2.3);
    \end{tikzpicture}
    \hspace{1cm}
    \begin{tikzpicture}
        \node [circle, draw = black, fill = red!50] (a) at (0, 0) {};
        \node [circle, draw = black] (b) at (1, 0) {};
        \node [circle, draw = black, fill = red!50] (c) at (2, 0) {};
        \node [circle, draw = black] (d) at (3, 0) {};
        \node [circle, draw = black, fill = red!50] (e) at (4, 0) {};

        \node [circle, draw = black] (f) at (2, 1) {};
        \node [circle, draw = black, fill = red!50] (g) at (2, 2) {};

        \draw (a) -- (b) -- (c) -- (d) -- (e);
        \draw (c) -- (f) -- (g);

        \node at (2, -1.55) {};
    \end{tikzpicture}
    \caption{A graph and its block-cut tree with blocks specified in red.}
\end{figure}

\begin{theorem}\label{opresult}
    All outerplanar graphs satisfy \Cref{conj}.
\end{theorem}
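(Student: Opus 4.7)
The plan is to prove \Cref{opresult} by strong induction on the pair $(|V(G)|,|E(G)|)$ ordered lexicographically, with \Cref{leafdel} and \Cref{hangingcycle} as the inductive engines. The base case of one vertex is trivial. If $G$ is disconnected, its components are outerplanar with strictly fewer vertices, so \Cref{union} together with the induction hypothesis finishes that case. Hence one may assume $G$ is a connected outerplanar graph on $n \geq 2$ vertices, and the whole argument reduces to the following structural claim: \emph{every such $G$ has either a leaf or a hanging cycle on some edge.} Once this is granted, the inductive step is routine: a leaf $x$ adjacent to $v$ lets one apply the induction hypothesis to the strictly smaller outerplanar graphs $G \setminus \{x\}$ and $G \setminus \{x,v\}$ and conclude via \Cref{leafdel}, while a hanging cycle on an edge $e$ produces the outerplanar graph $G' = G - e$ with the same vertex set but one fewer edge, to which the induction hypothesis again applies, after which \Cref{hangingcycle} transfers the conclusion to $G$.

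To prove the structural claim, I would pass to the block--cut tree of $G$ and pick a leaf block $B$, which contains at most one cut vertex $c$ of $G$. If $B$ is a single edge, its non-cut endpoint is a leaf of $G$ and we are done. Otherwise $B$ is biconnected outerplanar on at least three vertices, hence a Hamiltonian cycle together with a (possibly empty) set of non-crossing chords. If $B$ has no chords, so $B = C_m$, then any edge of $B$ incident to $c$ (or any edge if $G = B$) realises a hanging cycle, because every vertex of $B$ other than $c$ has degree $2$ in $G$. If $B$ has at least one chord, I consider the weak dual tree whose nodes are the inner faces of $B$ and whose edges join faces sharing a chord; it has at least two vertices, and therefore at least two leaves. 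A leaf face $F$ of this dual is bounded by exactly one chord $\{v,w\}$ together with a Hamiltonian-cycle arc $v, u_1, \ldots, u_k, w$, and the non-crossing condition forces each $u_i$ to have degree $2$ in $B$. Since every degree-$2$ vertex of $B$ is internal to a unique inner face, the internal vertex sets of distinct leaf faces are disjoint, so at least one leaf face avoids $c$; its arc then yields the desired hanging cycle on the chord $\{v,w\}$ in $G$.

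The main obstacle I foresee is precisely this structural claim: a priori, a degree-$2$ vertex of $B$ could be promoted to a higher-degree cut vertex in $G$, which would destroy the hanging-cycle condition in $G$ even though it holds in $B$. The pigeonhole argument above resolves this via two key features of biconnected outerplanar graphs---the weak dual is a tree with at least two leaves, and each degree-$2$ vertex is internal to a unique inner face---but the case $B = C_m$ must be handled separately because the weak dual then has only one node and offers no leaf-face pigeonhole.
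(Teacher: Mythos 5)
Your proposal is correct and follows essentially the same route as the paper: induction on vertices plus edges (your lexicographic order on $(|V|,|E|)$ is an equivalent variant), reduction to connected graphs via \Cref{union}, and the dichotomy that a connected outerplanar graph has a leaf or a hanging cycle, found inside a leaf block of the block--cut tree and discharged via \Cref{leafdel} or \Cref{hangingcycle}. Your weak-dual/leaf-face pigeonhole argument is a careful justification of the step the paper states without proof (that a leaf block $B$ contains a hanging cycle either hanging on an edge incident to the cut vertex or avoiding it entirely), and it correctly handles the chordless case $B = C_m$ separately.
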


\begin{proof}
    Let $O$ be an outerplanar graph. 
    We use induction on the sum of the number of vertices and edges of $O$. 
    By \Cref{union}, we can assume that $O$ is connected. 
    If $O$ has a leaf, we use \Cref{leafdel} and the induction hypothesis. 
    Note that any subgraph of an outerplanar graph is outerplanar.
    
    Next suppose that $O$ has no leaves. 
    If $O$ is biconnected, then it is obtained from a cycle graph by adding non-crossing edges between the vertices. 
    Hence, it is clear that $O$ will have a hanging cycle and we use \Cref{hangingcycle} and the induction hypothesis.

    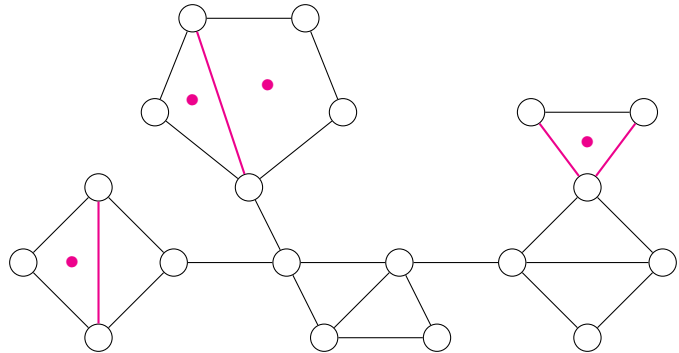
\begin{figure}[H]
        \centering
        \begin{tikzpicture}
            \node [circle, draw = black] (a) at (0, 1) {};
            \node [circle, draw = black] (b) at (1, 2) {};
            \node [circle, draw = black] (c) at (2, 1) {};
            \node [circle, draw = black] (d) at (1, 0) {};
            \node [circle, draw = black] (e) at (3.5, 1) {};
            \node [circle, draw = black] (f) at (3, 2) {};
            \node [circle, draw = black] (g) at (1.75, 3) {};
            \node [circle, draw = black] (h) at (2.25, 4.25) {};
            \node [circle, draw = black] (i) at (3.75, 4.25) {};
            \node [circle, draw = black] (j) at (4.25, 3) {};
            \node [circle, draw = black] (k) at (5, 1) {};
            \node [circle, draw = black] (l) at (4, 0) {};
            \node [circle, draw = black] (m) at (5.5, 0) {};
            \node [circle, draw = black] (n) at (7 - 0.5, 1) {};
            \node [circle, draw = black] (o) at (8 - 0.5, 2) {};
            \node [circle, draw = black] (p) at (9 - 0.5, 1) {};
            \node [circle, draw = black] (q) at (8 - 0.5, 0) {};
            \node [circle, draw = black] (r) at (6.75, 3) {};
            \node [circle, draw = black] (s) at (8.25, 3) {};

            \node at (0.65, 1) {\color{magenta} $\bullet$};
            \node at (2.25, 3.15) {\color{magenta} $\bullet$};
            \node at (3.25, 3.35) {\color{magenta} $\bullet$};
            \node at (7.5, 2.6) {\color{magenta} $\bullet$};

            \draw (a)--(b)--(c)--(d)--(a);
            \draw [thick,magenta] (b)--(d);
            \draw (c)--(e)--(f)--(g)--(h)--(i)--(j)--(f);
            \draw [thick,magenta] (h)--(f);
            \draw (e)--(k)--(m)--(l)--(e);
            \draw (l)--(k);
            \draw (k)--(n)--(o)--(p)--(q)--(n)--(p);
            \draw [thick, magenta] (s)--(o)--(r);
            \draw (r)--(s);
        \end{tikzpicture}
        \caption{Outerplanar graph with hanging cycles in `leaf blocks' specified (along with edge(s) they hang on). Note that there are other hanging cycles as well.}
        \label{fig:enter-label}
    \end{figure}
    
    If $O$ is not biconnected, consider a block $B$ of $O$ that has only one cut vertex $v$ of $O$ (this is a block that corresponds to a leaf in the block-cut tree of $O$). 
    Note that the subgraph $B$ is a biconnected outerplanar graph and hence is obtained from a cycle graph by adding non-crossing edges between the vertices. 
    We can always find a hanging cycle in the subgraph $B$ that is either hanging by an edge incident to $v$ or one that does not contain the vertex $v$. 
    Such a hanging cycle is a hanging cycle of $O$ as well and we use \Cref{hangingcycle} and the induction hypothesis.
\end{proof}

The idea used in the proof of the above theorem in fact gives us a slightly more general result. 
Let $G$ and $H$ be graphs with disjoint vertex sets. 
Let $v \in V(G)$ and $w \in V(W)$. 
The wedge of $H$ with $G$ at $v$ (using $w$) is the graph obtained from $G \sqcup H$ by identifying the vertices $v$ and $w$. 
When the vertex $v \in V(G)$ or $w \in V(H)$ used in the identification does not matter, it is not mentioned.

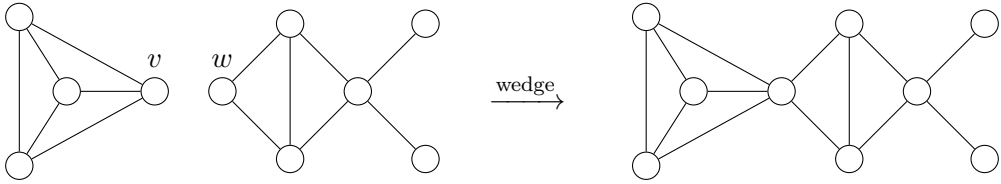
\begin{figure}[H]
    \centering
    \begin{tikzpicture}[scale=0.9]
        \node [circle, draw = black, label ={$v$}] (1) at (-1, 2) {};
        \node [circle, draw = black] (2) at (-2.3, 2) {};
        \node [circle, draw = black] (3) at (-3, 3.1) {};
        \node [circle, draw = black] (4) at (-3, 0.9) {};
        \draw (1)--(4)--(3)--(1);
        \draw (4)--(2)--(3);
        \draw (2)--(1);
        
        \node [circle, draw = black, label = {$w$}] (a) at (0, 2) {};
        \node [circle, draw = black] (b) at (1, 3) {};
        \node [circle, draw = black] (c1) at (2, 2) {};
        \node [circle, draw = black] (d) at (1, 1) {};

        \node [circle, draw = black] (c2) at (3, 3) {};
        \node [circle, draw = black] (g) at (3, 1) {};

        \draw (a)--(b)--(c1)--(c2);
        \draw (g)--(c1)--(d)--(a);
        \draw (d) -- (b);

        \node at (4.5, 2) {$\xrightarrow{\text{wedge}}$};
    \end{tikzpicture}
    \hspace{0.5cm}
    \begin{tikzpicture}[scale=0.9]
        \node [circle, draw = black] (2) at (-2.3 + 1, 2) {};
        \node [circle, draw = black] (3) at (-3 + 1, 3.1) {};
        \node [circle, draw = black] (4) at (-3 + 1, 0.9) {};
        
        \node [circle, draw = black] (a) at (0, 2) {};
        \draw (a)--(4)--(3)--(a);
        \draw (4)--(2)--(3);
        \draw (2)--(a);
        \node [circle, draw = black] (b) at (1, 3) {};
        \node [circle, draw = black] (c1) at (2, 2) {};
        \node [circle, draw = black] (d) at (1, 1) {};

        \node [circle, draw = black] (c2) at (3, 3) {};
        \node [circle, draw = black] (g) at (3, 1) {};

        \draw (a)--(b)--(c1)--(c2);
        \draw (g)--(c1)--(d)--(a);
        \draw (d) -- (b);
    \end{tikzpicture}
    \caption{Wedge of two graphs.}
    \label{wedge}
\end{figure}

\begin{corollary}\label{wedgeOP}
    Let $G$ be a graph with a vertex $v$ such that both $G$ and $G \setminus \{v\}$ satisfy \Cref{conj}. 
    Any graph obtained from $G$ by wedging an outerplanar graph at $v$ also satisfies \Cref{conj}.
\end{corollary}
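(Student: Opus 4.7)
The plan is to induct on $|V(H)| + |E(H)|$, where $H$ is the outerplanar graph wedged with $G$ at $v$ using some vertex $w \in V(H)$, and let $G'$ denote the resulting wedge. The base case, where $H$ is a single vertex, is immediate since then $G' = G$. If $H$ is disconnected, let $H_1$ be the component containing $w$ and $H_2$ the union of the remaining components; then $G'$ is the disjoint union of the wedge of $G$ with $H_1$ (strictly smaller, handled by induction) and $H_2$ (handled by \Cref{opresult}), and \Cref{union} finishes this case. Assume henceforth that $H$ is connected with at least two vertices.

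Suppose $H$ has a leaf $x \neq w$, so $x$ is still a leaf of $G'$. Let $y$ be its neighbor in $H$; I apply \Cref{leafdel}. The graph $G' \setminus \{x\}$ is the wedge of $G$ with the smaller outerplanar graph $H \setminus \{x\}$, which satisfies \Cref{conj} by induction. For $G' \setminus \{x, y\}$: if $y \neq w$, it is the wedge of $G$ with $H \setminus \{x, y\}$ and induction again applies; if $y = w$, it equals $(G \setminus \{v\}) \sqcup (H \setminus \{x, w\})$, which satisfies \Cref{conj} by combining the hypothesis on $G \setminus \{v\}$, \Cref{opresult} for $H \setminus \{x, w\}$, and \Cref{union}.

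Otherwise every vertex of $H$ other than $w$ has degree at least $2$, and I aim to find a hanging cycle in $H$ whose interior vertices (those of degree $2$ on the cycle) all lie in $V(H) \setminus \{w\}$. Such vertices have the same degree in $G'$ as in $H$, since only $w$'s degree is altered by the wedge; the cycle is thus hanging in $G'$. Applying \Cref{hangingcycle} gives $z(G'; i) \leq z(G' \setminus e; i)$, and $G' \setminus e$ is the wedge of $G$ with the smaller outerplanar graph $H \setminus e$, so induction concludes.

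The main obstacle is producing this hanging cycle, and the argument parallels that of \Cref{opresult}. When $H$ is biconnected, view it as a polygon with non-crossing chords: if $H$ is a chordless cycle, any edge incident to $w$ works; otherwise the dual tree of the inner faces has at least two leaves, and since $w$ is an interior vertex of at most one inner face, one of the corresponding hanging cycles has interior avoiding $w$. When $H$ is not biconnected, its block-cut tree has at least two leaf blocks. If some leaf block is disjoint from $w$, I find a hanging cycle inside that block avoiding its unique cut vertex exactly as in \Cref{opresult}; this is a hanging cycle of $H$ since its interior vertices are non-cut-vertices of $H$. Otherwise $w$ lies in every leaf block, which forces $w$ to be a cut vertex of $H$ and, since each leaf block has only one cut vertex, the unique cut vertex of each such block; so picking any leaf block $B$, the only vertex the interior must avoid is $c_B = w$ itself, which is again guaranteed by the argument of \Cref{opresult}.
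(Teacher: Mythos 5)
Your proof is correct and follows essentially the same route as the paper: induction on the number of vertices plus edges of the outerplanar piece, \Cref{leafdel} when there is a leaf other than the wedge vertex (with the same special handling, via $G\setminus\{v\}$ and \Cref{union}, when that leaf's neighbor is the wedge vertex), and otherwise a hanging cycle whose degree-two interior avoids the wedge vertex, handled by \Cref{hangingcycle}. The only difference is that you spell out carefully the block-cut-tree and dual-tree argument that the paper compresses into ``using similar ideas as in the proof of \Cref{opresult}''; your elaboration is accurate.
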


\begin{proof}
    Let $O$ be the outerplanar graph we wedge at $v$. 
    We use induction on the sum of the number of vertices and edges of $O$. 
    If $O$ has a leaf (other than $v$), then we use \Cref{leafdel} and apply the induction hypothesis. 
    Note that even if such a vertex is adjacent to $v$, deleting $v$ gives a disjoint union of $G \setminus \{v\}$ and an outerplanar graph and \Cref{union} shows that this union also satisfies \Cref{conj}. 
    Next, suppose that $O$ has no such leaf. 
    Using similar ideas as in the proof of \Cref{opresult}, it can be shown that $O$ must have a hanging cycle that is also a hanging cycle in the wedge. 
    In this case, we apply \Cref{hangingcycle}.
\end{proof}

For example, the above corollary can be used to show that the wedged graph in \Cref{wedge} satisfies \Cref{conj}.

The following is an immediate corollary to \Cref{hangingcycle}.

\begin{corollary}\label{wedgecyclepath}
    Let $G$ be a graph and $v \in V(G)$. 
    Let $G'$ be the graph obtained by wedging the cycle graph $C_n$ with $G$ at $v$. 
    Let $G''$ be the graph obtained by wedging the path $P_n$ with $G$ at $v$ using a leaf vertex of the path. 
    Then $z(G'; i) \leq z(G''; i)$ for all $i \geq 1$.
\end{corollary}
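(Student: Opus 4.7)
The plan is to express $G''$ as the graph obtained from $G'$ by deleting a single edge and then invoke Lemma \ref{hangingcycle} directly. The cycle attached at $v$ in $G'$ will itself play the role of the hanging cycle, hanging on one of the two edges of that cycle incident to $v$.

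In more detail, I would label the vertices of the $C_n$ summand as $w, a_1, a_2, \ldots, a_{n-1}$ in cyclic order and construct $G'$ by identifying $w$ with $v$. Since none of the vertices $a_1, \ldots, a_{n-1}$ is identified with any vertex of $G$, each $a_i$ has degree exactly $2$ in $G'$, with the same neighbors it had in $C_n$. Taking $e = \{v, a_1\}$, the cycle $v, a_{n-1}, a_{n-2}, \ldots, a_2, a_1, v$ passes through $e$ and all of its internal vertices $a_{n-1}, \ldots, a_2$ have degree $2$ in $G'$, so $G'$ has a hanging cycle on $e$ in the sense of the definition preceding Lemma \ref{hangingcycle}. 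Removing $e$ collapses the former $C_n$ into the path $a_1 - a_2 - \cdots - a_{n-1} - v$, which is attached to $G$ only at the endpoint $v$; this is exactly the wedge of $P_n$ with $G$ at $v$ using a leaf of the path, namely $G''$. Lemma \ref{hangingcycle} then yields $z(G'; i) \leq z(G''; i)$ for all $i \geq 1$.

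Since the statement is advertised in the paper as an immediate corollary of Lemma \ref{hangingcycle}, I do not expect any real obstacle. The only point requiring care is verifying that the wedge operation leaves the non-identified cycle vertices at degree exactly $2$, which is immediate from the fact that a wedge identifies only a single pair of vertices and no new edges are introduced between the two sides beyond that identification.
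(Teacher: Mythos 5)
Your argument is correct and is precisely the intended one: the paper gives no explicit proof, calling the statement an immediate corollary of \cref{hangingcycle}, and your verification that the attached cycle is a hanging cycle on an edge incident to $v$ whose deletion produces $G''$ is exactly the required observation.
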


In some sense, the graph $G''$ in the above corollary is more `path-like' than $G'$. 
We will see another result (\Cref{hangingpaths}) where a more `path-like' graph has more forcing sets of each cardinality than one that is less `path-like'.

\subsection{Cones and simplicial vertices}

For any graph $G$, the \emph{cone} over $G$, denoted by $c(G)$, is the graph obtained by adding a new vertex $v$ that is adjacent to all vertices of $G$. 
Here $v$ is called the cone vertex.

\begin{lemma}
    Let $G$ be a graph and $c(G)$ be the cone over $G$ with cone vertex $v$.
    \begin{enumerate}
        \item If $G$ has no isolated vertices and satisfies \Cref{conj}, then so does $c(G)$.

        \item If $G$ has exactly one isolated vertex $x$ and $G \setminus \{x\}$ satisfies \Cref{conj}, then so does $c(G)$.
        
        \item If $G$ has at least $2$ isolated vertices, then $c(G)$ satisfies \Cref{conj}.
    \end{enumerate}
\end{lemma}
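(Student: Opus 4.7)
The proof splits into three cases which I would attack in reverse order. Case~(3) is immediate via \Cref{fort}: if $x_1,x_2$ are two isolated vertices of $G$, then in $c(G)$ they are adjacent only to the cone vertex $v$, while no other vertex of $V(G)$ is adjacent to them. Hence $\{x_1,x_2\}$ is a fort of $c(G)$ of size $2$, and since $c(G)$ has at least three vertices $z(c(G))\geq 1$, so the fort has size $\leq z(c(G))+1$.

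Case~(2) I would reduce to Case~(1). The unique isolated vertex $x$ of $G$ is a leaf of $c(G)$ adjacent to $v$, so I apply \Cref{leafdel}: it suffices that $c(G)\setminus\{x\}$ and $c(G)\setminus\{x,v\}$ satisfy \Cref{conj}. But $c(G)\setminus\{x\}=c(G\setminus\{x\})$ is a cone over a graph with no isolated vertices, handled by Case~(1); and $c(G)\setminus\{x,v\}=G\setminus\{x\}$ satisfies \Cref{conj} by hypothesis.

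Case~(1) is the substantive part, and my plan is to obtain an exact description of the forcing sets of $c(G)$. The key structural observation I would prove first is that under the no-isolated-vertex assumption, zero forcing in $G$ starting from \emph{any} set $T$ either colors all of $V(G)$ or terminates with at least two uncolored vertices: if it stopped with a unique uncolored vertex $w$, then any (necessarily colored) neighbor $u$ of $w$ would see $w$ as its only uncolored neighbor and would force it. With this in hand I split on whether $v\in S$ for a set $S\subseteq V(c(G))$. If $v\in S$, then forcing in $c(G)$ from $S$ mimics forcing in $G$ from $T:=S\setminus\{v\}$, augmented only by $v$'s ability to force the unique uncolored vertex of $V(G)$ when one remains; the observation makes this augmentation useless, so $S$ is forcing in $c(G)$ iff $T$ is forcing in $G$. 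If $v\notin S$, then for every $u\in S$ the vertex $v$ is an uncolored neighbor in $c(G)$, so the first force must color $v$ and is performed by some $u\in S$ with $N_G(u)\subseteq S$; after $v$ is colored we reduce to the previous case.

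Combining the two bullets gives $z(c(G);i)\leq z(G;i-1)+z(G;i)$, and since $G$ satisfies \Cref{conj} this is at most $z(P_n;i-1)+z(P_n;i)$. A direct computation using \Cref{path_result} (Pascal applied twice) yields $z(P_{n+1};i)-z(P_n;i)-z(P_n;i-1)=\binom{n-i-1}{i-2}\geq 0$, which closes Case~(1). The main obstacle is the characterization itself: one has to be careful about the dual role of $v$ (it is forced at some point, and once colored it can itself force the last uncolored vertex), and the no-isolated-vertex hypothesis is precisely what makes this second role vacuous and the reduction clean.
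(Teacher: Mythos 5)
Your proof is correct and follows essentially the same route as the paper: parts (2) and (3) are handled identically (via \Cref{leafdel} and \Cref{fort} respectively), and part (1) rests on the same key observation that, absent isolated vertices, the cone vertex's only possible force (the last uncolored vertex of $G$) is redundant. The only cosmetic difference is that you bound $z(c(G);i)$ by $z(G;i)+z(G;i-1)$ directly, whereas the paper states the complementary inequality $z'(c(G);i)\geq z'(G;i)+z'(G;i-1)$; the two are equivalent and the binomial verifications match.
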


\begin{proof}
\begin{enumerate}
    \item To prove the first statement, we note that if $S \in Z'(G; i)$ for some $i \geq 1$, then $S \in Z'(c(G); i)$ and $S \cup \{v\} \in Z'(c(G); i + 1)$. 
    This is because, assuming $v$ has been colored at some point, it can only force a vertex $w$ in $G$ if all other vertices in $G$ are already colored. 
    But this would mean that $S \in Z(G; i)$ since $w$ could be forced by any of its neighbors in $G$.

    \begin{figure}[H]
        \centering
        \begin{tikzpicture}
            \node[circle, draw = black, fill = cyan, label = above: {$v$}] (v) at (3, 4.5) {};
            \node[circle, draw = black, label = right: {$w$}] (w) at (5.5, 2) {};
            \node[circle, draw = black, fill = cyan] (a) at (1, 2) {};
            \node[circle, draw = black, fill = cyan] (b) at (1, 3) {};
            \node[circle, draw = black, fill = cyan] (c) at (2, 2) {};
            \node[circle, draw = black, fill = cyan] (d) at (3.5, 2) {};

            \draw (a) -- (c) -- (b) -- (a);
            
            \draw (d) -- (w) node [midway, below] {$\xrightarrow{in\ G}$};
            
            \draw (a) -- (v) -- (b);
            \draw (c) -- (v) -- (d);

            \draw (v) -- (w) node [sloped, midway, above] {$\xrightarrow{in\ c(G)}$};
        \end{tikzpicture}
        \caption{Vertex forced by $v$ in $c(G)$ can be forced in $G$ as well.}
    \end{figure}
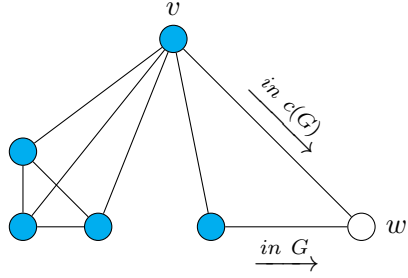
    
    Let $n$ be the number of vertices in $G$. 
    Since $G$ satisfies \Cref{conj}, this shows that for any $i \geq 1$,
    \begin{align*}
        z'(c(G); i) &\geq z'(G; i) + z'(G; i - 1)\\
        &\geq z'(P_n; i) + z'(P_n; i - 1)\\
        &= \binom{n - i - 1}{i} + \binom{n - (i - 1) - 1}{i - 1}\\
        &\geq \binom{n - i - 1}{i} + \binom{n - i - 1}{i - 1} = \binom{(n + 1) - i - 1}{i} = z'(P_{n + 1}; i).
    \end{align*}
    Hence, $c(G)$ also satisfies \Cref{conj}.

    \item The second statement follows from \Cref{leafdel} and the first statement. 
    Note that $x$ becomes a leaf in $c(G)$.

    \item The third statement follows from \Cref{fort} since any two isolated vertices in $G$ form a fort in $c(G)$.
\end{enumerate}
\end{proof}

In particular, the above lemma shows, by induction on the number of vertices, that all \emph{threshold graphs} satisfy \Cref{conj}.

\begin{definition}\label{thresh_def}
    \emph{Threshold graphs} are defined recursively as follows:
    \begin{enumerate}
        \item The graph consisting of just one vertex is a threshold graph.
        \item If $G$ is a threshold graph, so is $c(G)$.
        \item If $G$ is a threshold graph, so is the graph obtained by adding an isolated vertex to $G$.
    \end{enumerate}
\end{definition}

\begin{figure}[ht]
    \centering
    \begin{tikzpicture}[scale=0.5]
    \node (2) [circle,draw=black] at (0,0) {};
    \node (4) at (0.75,-2) {};
    \node at (1.5,0) {$\rightarrow$};
    \end{tikzpicture}
    \begin{tikzpicture}[scale=0.5]
        \node (2) [circle,draw=black] at (0,0) {};
        \node (3) [circle,draw=black] at (2,0) {};
        \node (4) at (0.75,-2) {};
        \node at (3.5,0) {$\rightarrow$};
    \end{tikzpicture}
    \begin{tikzpicture}[scale=0.5]
        \node (2) [circle,draw=black] at (0,0) {};
        \node (3) [circle,draw=black] at (2,0) {};
        \node (1) [circle,draw=black] at (1,2) {};
        \node (4) at (0.75,-2) {};
        \node at (3.5,0) {$\rightarrow$};
        \draw (3)--(1)--(2);
    \end{tikzpicture}
    \begin{tikzpicture}[scale=0.5]
        \node (2) [circle,draw=black] at (0,0) {};
        \node (3) [circle,draw=black] at (2,0) {};
        \node (1) [circle,draw=black] at (1,2) {};
        \draw (3)--(1)--(2);
        \node at (3.5,0) {$\rightarrow$};
        \node (4) [circle,draw=black] at (1,-2) {};
        \draw (1)--(4);
        \draw (2)--(4);
        \draw (3)--(4);
    \end{tikzpicture}
    \begin{tikzpicture}[scale=0.5]
        \node (2) [circle,draw=black] at (0,0) {};
        \node (3) [circle,draw=black] at (2,0) {};
        \node (1) [circle,draw=black] at (1,2) {};
        \draw (3)--(1)--(2);
        \node (4) [circle,draw=black] at (1,-2) {};
        \node (5) [circle,draw=black] at (3.5,0) {};
        \draw (1)--(4);
        \draw (2)--(4);
        \draw (3)--(4);
    \end{tikzpicture}
    \caption{Construction of a threshold graph on $5$ vertices.}
    \label{tgconst1}
\end{figure}
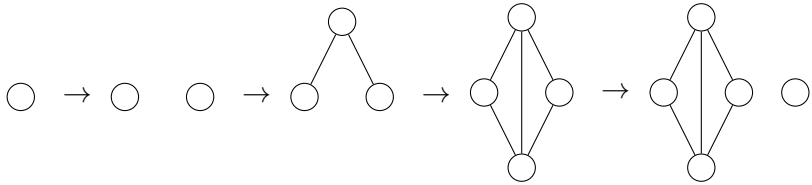

We note that the zero forcing polynomial of threshold graphs has been studied in \cite[Section 4.2]{boyer19}.

\begin{corollary}\label{TOPW}
    Let $G$ be a threshold graph and $v_1, v_2, \ldots, v_k$ be vertices of $G$. 
    Let $O_1, O_2, \ldots, $ $O_k$ be outerplanar graphs. 
    The graph obtained by wedging the graph $O_i$ with $G$ at $v_i$ for all $i \in [k]$ satisfies \Cref{conj}.
\end{corollary}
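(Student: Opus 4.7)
The plan is to proceed by induction on $k$, the number of outerplanar graphs being wedged. The base case $k = 0$ is exactly the statement that threshold graphs satisfy \Cref{conj}, which was just established via the lemma on cones preceding this corollary. The key tool for the inductive step will be \Cref{wedgeOP}: to attach a single outerplanar graph at a vertex $v$ of some graph $G'$, one only needs both $G'$ and $G' \setminus \{v\}$ to satisfy \Cref{conj}.

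For the inductive step, I would let $H$ denote the graph obtained by wedging only $O_1, \ldots, O_{k-1}$ at $v_1, \ldots, v_{k-1}$, so that the graph of interest is the wedge of $H$ with $O_k$ at $v_k$. Applying the inductive hypothesis to $G$ with the first $k - 1$ wedges shows that $H$ satisfies \Cref{conj}, so the real content is to verify that $H \setminus \{v_k\}$ also satisfies \Cref{conj}. For this I would use two facts: first, that the class of threshold graphs is closed under induced subgraphs (being precisely the class of $(P_4, C_4, 2K_2)$-free graphs), so $G \setminus \{v_k\}$ is itself a threshold graph; and second, that any $O_j$ wedged at the vertex $v_k$ itself (i.e.\ with $v_j = v_k$) becomes disconnected from the rest after deletion of $v_k$. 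Consequently, $H \setminus \{v_k\}$ decomposes as a disjoint union of a single ``main'' component --- namely the threshold graph $G \setminus \{v_k\}$ with $O_j$ wedged at $v_j$ for each $j < k$ with $v_j \neq v_k$ --- together with small pieces $O_j \setminus \{w_j\}$ (one for each $j < k$ with $v_j = v_k$, where $w_j$ is the identifying vertex of $O_j$). The main component is a threshold graph with strictly fewer than $k$ outerplanar wedges and so satisfies \Cref{conj} by the inductive hypothesis; each small piece is a subgraph of an outerplanar graph, hence outerplanar, and thus satisfies \Cref{conj} by \Cref{opresult}. Then \Cref{union} assembles these into the conclusion that $H \setminus \{v_k\}$ satisfies \Cref{conj}, and a final application of \Cref{wedgeOP} completes the induction.

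The main obstacle I anticipate is the careful bookkeeping required to describe $H \setminus \{v_k\}$ when $v_k$ coincides with some of the earlier $v_j$'s, since this is precisely where the ``disjoint union of threshold-with-wedges plus some outerplanar pieces'' structure emerges and where the repeated use of \Cref{union}, \Cref{opresult}, and the inductive hypothesis must be woven together cleanly. Beyond this bookkeeping, the argument is essentially a direct composition of results already developed in the paper, with the hereditary nature of threshold graphs under vertex deletion being the only supplementary fact needed.
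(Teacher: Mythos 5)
Your proposal is correct and follows essentially the same route as the paper: induction on $k$, an application of \Cref{wedgeOP} for the inductive step, and the fact that threshold and outerplanar graphs are closed under vertex deletion to handle $H \setminus \{v_k\}$. The paper states this in two sentences; your careful decomposition of $H \setminus \{v_k\}$ into a smaller threshold-with-wedges component plus outerplanar pieces (assembled via \Cref{union} and \Cref{opresult}) is just the bookkeeping the paper leaves implicit.
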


\begin{proof}
    Note that deleting a vertex from a threshold (respectively, outerplanar) graph results in a threshold (respectively, outerplanar) graph. 
    The result now follows using induction on $k$ and \Cref{wedgeOP}.
\end{proof}

Similar to hanging cycles, we now consider another operation where we delete certain edges of a graph. 
A vertex $v$ in a graph is called a \emph{simplicial vertex} if all the vertices adjacent to $v$ are adjacent to each other, i.e., its neighborhood $N(v)$ is a clique.

\begin{lemma}
    Let $G$ be a graph with a simplicial vertex $v$. 
    Let $G'$ be a graph obtained from $G$ by deleting some of the edges between vertices of $N(v)$. 
    For all $i \geq 1$, we have $z(G; i) \leq z(G'; i)$.
\end{lemma}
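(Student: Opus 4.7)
The plan is to establish the stronger set-theoretic containment $Z(G;i) \subseteq Z(G';i)$ for every $i \geq 1$, which immediately implies the cardinality inequality $z(G;i) \leq z(G';i)$. The guiding observation is that, thanks to the simpliciality of $v$, the vertex $v$ can act as a universal substitute forcer whenever one of the deleted edges would otherwise be used in $G$. This is in the same spirit as the proof of \Cref{hangingcycle}, where one shows that any forcing in the larger graph can be rerouted in the sparser one.

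Fix a set $S \in Z(G;i)$ together with a valid forcing sequence $u_1 \to w_1,\, u_2 \to w_2,\, \ldots,\, u_k \to w_k$ of $S$ in $G$, and let $C_t$ denote the set of colored vertices after step $t$, with $C_0 = S$. I will exhibit a valid forcing sequence in $G'$ starting from $S$ that colors $w_1, w_2, \ldots, w_k$ in the same order, maintaining the invariant that the colored set in $G'$ equals $C_{t-1}$ just before step $t$. If the edge $\{u_t, w_t\}$ is still present in $G'$, then since $N_{G'}(u_t) \subseteq N_G(u_t)$ and $w_t$ was the unique uncolored neighbor of $u_t$ in $G$, the vertex $w_t$ remains the unique uncolored neighbor of $u_t$ in $G'$, so the same forcing step carries over. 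If $\{u_t, w_t\}$ was deleted, then by hypothesis $u_t, w_t \in N(v)$, and I instead use $v$ as the forcer. Since $v \in N_G(u_t)$, the vertex $v$ has already been colored; moreover, because $N(v)$ is a clique in $G$, every vertex of $N(v) \setminus \{u_t, w_t\}$ is a neighbor of $u_t$ in $G$ and so is already colored, and $u_t$ itself is colored. Since no edge incident to $v$ was deleted, $N_{G'}(v) = N(v)$, so $w_t$ is the unique uncolored neighbor of $v$ in $G'$, and the substituted forcing step is valid.

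Iterating through all $k$ steps colors the whole of $V(G') = V(G)$ from $S$ in $G'$, so $S \in Z(G'; i)$, finishing the proof. The only real obstacle is justifying the substitution at each deleted-edge step, and this is precisely what the clique structure on $N(v)$ delivers: whenever one vertex of $N(v)$ is in a position to force another vertex of $N(v)$, the entire clique $N(v)$ together with $v$ must already be colored except for the target, so $v$ inherits exactly the same forcing opportunity in $G'$.
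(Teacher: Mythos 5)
Your proposal is correct and follows essentially the same route as the paper: both establish the containment $Z(G;i) \subseteq Z(G';i)$ by observing that whenever a deleted edge $\{u_t, w_t\}$ between two vertices of $N(v)$ would be used to force, the clique structure guarantees that $v$ and all of $N(v)\setminus\{w_t\}$ are already colored, so $v$ can perform the force instead. Your version merely makes the step-by-step replay of the forcing sequence more explicit than the paper does, which is a presentational rather than a substantive difference.
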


\begin{proof}
    We show that $Z(G; i) \subseteq Z(G'; i)$ for all $i \geq 1$. 
    Let $S$ be a forcing set in $G$. 
    Suppose the edge $e = \{w_1, w_2\}$ where $w_1, w_2 \in N(v)$ plays a role in the coloring of $G$. 
    This happens only if, without loss of generality, at some point in the coloring, $w_1$ and all its neighbors apart from $w_2$ are colored and $w_1$ is used to color $w_2$. 
    Note that in this case, none of the other edges between vertices of $N(v)$ played a role in the coloring up to that point since $w_2$ was uncolored.

    \begin{figure}[H]
        \centering
        \begin{tikzpicture}
            \node[circle, draw = black, fill = cyan, label = right: {$v$}] (v) at (6, 4) {};
            \node[circle, draw = black, label = below: {$w_2$}] (w2) at (1, 4) {};
            \node[circle, draw = black, fill = cyan, label = right: {$w_1$}] (w1) at (3, 6) {};
            \node[circle, draw = black, fill = cyan] (w) at (3, 2) {};

            \draw[dashed] (w1) -- (w2) node [midway, sloped, above] {$\xleftarrow{in\ G}$};
            \draw[dashed] (w2) -- (w) -- (w1);
            \draw (w) -- (v) -- (w1);
            \draw (v) -- (w2) node [above, pos = 0.35] {$\xleftarrow{in\ G'}$};

            \draw[ultra thin] (w1) -- (2.75, 6.5);
            \draw[ultra thin] (w1) -- (3, 6.5);
            \draw[ultra thin] (w1) -- (3.25, 6.5);

            \draw[ultra thin] (w2) -- (0.5, 3.75);
            \draw[ultra thin] (w2) -- (0.5, 4);
            \draw[ultra thin] (w2) -- (0.5, 4.25);

            \draw[ultra thin] (w) -- (2.75, 1.5);
            \draw[ultra thin] (w) -- (3, 1.5);
            \draw[ultra thin] (w) -- (3.25, 1.5);
        \end{tikzpicture}
        \caption{Forcing sets in $G$ are forcing in $G'$.}
    \end{figure}
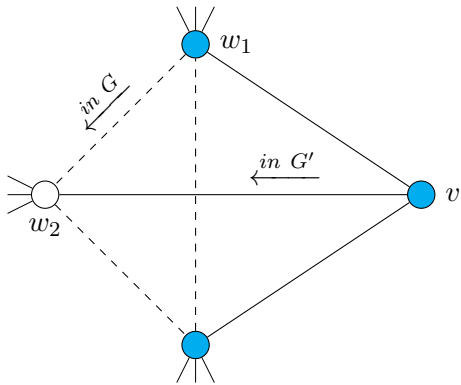

    Instead of using $w_1$ to color $w_2$, we note that $v$ is colored, and all the vertices in $N(v)$ apart from $w_2$ are colored. 
    Hence, in $G'$, $v$ can be used to color $w_2$. 
    This shows that $S$ is forcing in $G'$ as well.
\end{proof}

\begin{corollary}
    Let $G$ be a graph that satisfies \Cref{conj} and $v \in V(G)$. 
    The graph obtained by adding all edges between the vertices of $N(v)$ (thus making $v$ a simplicial vertex) also satisfies \Cref{conj}.
\end{corollary}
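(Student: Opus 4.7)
The plan is to apply the preceding lemma directly, in the reverse direction. Let $G^+$ denote the graph obtained from $G$ by adding all edges between vertices of $N_G(v)$, and let $n = |V(G)| = |V(G^+)|$. First I would verify that $v$ is indeed a simplicial vertex of $G^+$: since the only edges added lie between pairs of vertices in $N_G(v)$ and no edge incident to $v$ is added or removed, we have $N_{G^+}(v) = N_G(v)$, and by construction every pair of vertices in this common neighborhood is adjacent in $G^+$.

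The key observation is that $G$ itself can be described as the graph obtained from $G^+$ by deleting exactly the edges between vertices of $N_{G^+}(v)$ that were not present in $G$. Since $v$ is simplicial in $G^+$, the preceding lemma applies and gives
\[
z(G^+; i) \;\leq\; z(G; i) \quad \text{for all } i \geq 1.
\]
Combining this with the hypothesis that $G$ satisfies \Cref{conj}, namely $z(G; i) \leq z(P_n; i)$ for all $i \geq 1$, yields $z(G^+; i) \leq z(P_n; i)$ for all $i \geq 1$, which is precisely the statement that $G^+$ satisfies \Cref{conj}.

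There is no real obstacle here; the corollary is essentially a restatement of the previous lemma in the contrapositive direction. The only point meriting care is the bookkeeping that the neighborhood of $v$ is unchanged when we complete it to a clique, which ensures the simplicial-vertex hypothesis of the lemma applies to the enlarged graph rather than to $G$.
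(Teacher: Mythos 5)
Your argument is correct and is exactly the intended deduction: the enlarged graph $G^+$ has $v$ as a simplicial vertex with $N_{G^+}(v) = N_G(v)$, and $G$ is recovered from $G^+$ by deleting some edges inside that neighborhood, so the preceding lemma gives $z(G^+;i) \le z(G;i) \le z(P_n;i)$. The paper leaves this corollary without an explicit proof precisely because this is the one-line application of the lemma, and your bookkeeping about the unchanged neighborhood is the right point to check.
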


\subsection{Hanging paths}

A \emph{hanging path} at a vertex $v$ of a graph $G$ is a path $a_1, a_2, \ldots, a_k$ in $G$ such that
\begin{itemize}
    \item $a_1$ is adjacent to $v$,
    \item for all $i \in [k - 1]$, the vertex $a_i$ has degree $2$, and
    \item the vertex $a_k$ has degree $1$.
\end{itemize}

We now show that `combining' the paths hanging from a vertex increases the number of forcing sets of each cardinality. 
One way to interpret this result is that graphs that are more `path-like' have more zero forcing sets of each cardinality.

\begin{proposition}\label{hangingpaths}
    Suppose that a graph $G$ has two paths $A$ and $B$ hanging at a vertex $v$. 
    Let $G'$ be the graph obtained from $G$ by removing $B$ and attaching it at the end of $A$ (see \Cref{fig:hanging paths}). 
    For all $i \geq 1$, we have $z(G;i) \leq z(G';i)$.
\end{proposition}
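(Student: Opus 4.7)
The plan is to construct an explicit injection $\phi: Z(G;i) \hookrightarrow Z(G';i)$. Since $V(G) = V(G')$ and $G'$ is obtained from $G$ by replacing the edge $\{v, b_1\}$ with $\{a_k, b_1\}$, the vertex $a_k$ changes from a leaf of $G$ to an internal vertex of degree $2$ in $G'$, while $v$ loses one neighbour. For ``easy'' forcing sets $\phi(S) = S$ should suffice: if $b_\ell \in S$, the leaf $b_\ell$ initiates a cascade along the entire combined path of $G'$, colouring all of $A \cup B \cup \{v\}$; if $v \in S$, the reduced degree of $v$ in $G'$ makes cascading outward from $v$ strictly easier than in $G$; and if $S \cap A$ contains a consecutive pair (and similarly for $B$), the internal propagation along the combined path of $G'$ reproduces the corresponding propagation in $G$. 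In all these cases the subsequent $H$-forcing is identical in $G$ and $G'$ since the two graphs agree outside $A \cup B \cup \{v\}$.

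The genuinely hard case is $a_k \in S$, $b_\ell \notin S$, $v \notin S$, with no consecutive pair in either $S \cap A$ or $S \cap B$. Here the $G$-forcing critically uses $a_k$'s leaf property to cascade down $A$ and then colour $v$, which fails in $G'$ because $a_k$ has two uncoloured neighbours. My plan is to set $\phi(S) := (S \setminus \{a_k\}) \cup \{v\}$: the pre-coloured $v$ in $\phi(S)$ plays the role $a_k$ played in $G$. Once $(S \cap V(H)) \cup \{v\}$ forces $H$ in $G'$ (which it does, since $S$ forcing in $G$ and the fact that the cascade from $a_k$ only reaches $H$ through $v$ together imply $(S \cap V(H)) \cup \{v\}$ is forcing in $H$), the reduced degree of $v$ in $G'$ lets $v$ force $a_1$, and the cascade $a_1 \to a_2 \to \dots \to a_k \to b_1 \to \dots \to b_\ell$ colours the rest.

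The main obstacle is injectivity. A collision arises whenever a set $T \in Z(G;i)$ with $v \in T$ (mapped identically) coincides with $\phi(S) = (S \setminus \{a_k\}) \cup \{v\}$ for some $S$ in the hard case; this happens exactly when both $T$ and $\sigma(T) := (T \setminus \{v\}) \cup \{a_k\}$ lie in $Z(G;i)$. For each such $\sigma$-orbit, a secondary assignment for the member containing $a_k$ is needed; a natural candidate is $(S \setminus \{a_k\}) \cup \{b_\ell\}$, subject to careful verification that this image lies in $Z(G';i)$ and does not itself collide with another identity image. Resolving all collisions uniformly will require a fine structural analysis of the forcing process, and this is where I expect the main technical difficulty. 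An alternative route is induction on $|V(B)|$, applying \Cref{leafdel} to the shared leaf $b_\ell$ in $G$ and $G'$: the principal terms of the two decompositions coincide after using the inductive hypothesis, and the remaining comparison reduces to controlling the ``extra'' non-forcing sets containing both $b_\ell$ and $b_{\ell-1}$.
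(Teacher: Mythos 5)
Your overall architecture -- an injection $Z(G;i)\hookrightarrow Z(G';i)$ that is the identity except on the sets $S$ with $a_k\in S$, $v,b_1,b_m\notin S$, and no consecutive pair in $S\cap A$ or $S\cap B$ -- is exactly the paper's, and you correctly identify injectivity in that exceptional case as the crux. But you leave the crux unresolved, and your candidate $\phi(S)=(S\setminus\{a_k\})\cup\{v\}$ provably fails. Concretely, let $G$ consist of a triangle $v,u_1,u_2$ with a hanging path $A=a_1,a_2,a_3$ and a hanging path $B=b_1$ at $v$. The set $S=\{u_1,a_1,a_3\}$ is forcing in $G$, lies in your hard case, and is not forcing in $G'$, so the swap must be applied; but $\phi(S)=\{u_1,v,a_1\}$ is itself a forcing set of $G$ containing $v$ and is therefore also the identity-image of $T=\{u_1,v,a_1\}$ -- a collision. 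Your fallback $(S\setminus\{a_k\})\cup\{b_m\}=\{u_1,a_1,b_1\}$ collides as well, since that set is forcing in $G$ and contains $b_m$, hence is again an identity-image. The alternative induction via \Cref{leafdel} does not close the gap either: that lemma gives only a one-sided inequality for $z'$, so without an exact recursion the ``extra'' non-forcing sets cannot be matched between $G$ and $G'$. (Separately, your justification of the easy case $v\in S$ by ``reduced degree of $v$'' is too quick: when $a_k\in S$ with no consecutive pair in $S\cap A$, the vertex $a_k$ is no longer a leaf of $G'$, and one must argue that $b_1$ gets colored from the $B$-side so that the pair $a_k,b_1$ is consecutive in $C$ and the cascade can reach $a_1$.)

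The missing idea, which is how the paper resolves the hard case, is to \emph{reverse} the entire pattern of $S$ on the path $v=a_0,a_1,\dots,a_k$, i.e., set $c_{\iota(S)}(a_l)=c_S(a_{k-l})$ for $0\le l\le k$, rather than merely moving the colored vertex from $a_k$ to $v$. The reversal is an involution, hence injective on the hard case; its image is still forcing in $G'$ (the $H$-part together with $v$ forces $H$, then $v$ forces $a_1$ and the cascade runs down $C$); and -- the point your swap misses -- its image is \emph{non-forcing in $G$}: since $a_k\in S$ forces $a_{k-1}\notin S$, the reversal leaves $a_1$ uncolored, the reversed set on $a_1,\dots,a_k$ still has no consecutive pair and misses both endpoints, $S\cap B$ likewise can force nothing, and $v$ permanently has the two uncolored neighbours $a_1$ and $b_1$. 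Thus neither $a_1$ nor $b_1$ can ever be coloured, so the image is disjoint from every identity-image (all of which are forcing in $G$), and injectivity follows. In the example above this sends $S=\{u_1,a_1,a_3\}$ to $\{u_1,v,a_2\}$, which is non-forcing in $G$ and forcing in $G'$, avoiding the collision.
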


\begin{proof}
    Suppose that the path $A$ has vertices $a_1, a_2, \ldots, a_k$ and the path $B$ has vertices $b_1, b_2, \ldots, b_m$ with the vertices $a_1, b_1$ adjacent to $v$. 
    The graph $G'$ is obtained from $G$ by deleting the edge $\{v, b_1\}$ and adding the edge $\{a_k, b_1\}$. 
    We use $C$ to denote the path $a_1, a_2, \ldots, a_k, b_1, b_2, \ldots, b_m$ hanging from $v$ in $G'$.

    \begin{figure}[ht]
        \centering
        \begin{tikzpicture}[scale=.8]
            \node[circle, draw = black, label = left: {$a_k$}] (a4) at (-0.4, -1) {};
            \node[circle, draw = black, label = left: {$a_{k - 1}$}] (a3) at (0.2, 0.5) {};
            \node[circle, draw = black, label = left: {$a_2$}] (a2) at (0.8, 2) {};
            \node[circle, draw = black, label = left: {$a_1$}] (a1) at (1.4, 3.5) {};
            \node[circle, draw = black, label = left: {$v$}] (w) at (2, 5) {};
            \node[circle, draw = black, label = right: {$b_1$}] (b1) at (2.6, 3.5) {};
            \node[circle, draw = black, label = right: {$b_2$}] (b2) at (3.2, 2) {};
            \node[circle, draw = black, label = right: {$b_{m - 1}$}] (b3) at (3.8, 0.5) {};
            \node[circle, draw = black, label = right: {$b_m$}] (b4) at (4.4, -1) {};

            \draw (a4) -- (a3);
            \draw (a3) -- (a2) node[midway, sloped, fill = white] {\tiny $\cdots$};
            \draw (a2) -- (a1) -- (w) -- (b1) -- (b2);
            \draw (b2) -- (b3) node[midway, sloped, fill = white] {\tiny $\cdots$};
            \draw (b3) -- (b4);

            \draw[ultra thin] (w) -- (1.75, 5.5);
            \draw[ultra thin] (w) -- (2, 5.5);
            \draw[ultra thin] (w) -- (2.25, 5.5);

            \draw [gray, thin, decorate,decoration={brace,amplitude=10pt, mirror,raise=6ex}] (1.4, 3.6) -- (-0.4, -1.3);
            \node at (-1.35, 1.75) {$A$};
            \draw [gray, thin, decorate,decoration={brace,amplitude=10pt,raise=6ex}] (2.6, 3.6) -- (4.4, -1.3);
            \node at (5.35, 1.75) {$B$};

            \node[circle, draw = black, label = left: {$v$}] (w') at (7 + 3, 5) {};
            \node[circle, draw = black, label = left: {$a_1$}] (a1') at (7 + 3, 4) {};
            \node[circle, draw = black, label = left: {$a_2$}] (a2') at (7 + 3, 3) {};
            \node[circle, draw = black, label = left: {$a_k$}] (a3') at (7 + 3, 1.5) {};
            \node[circle, draw = black, label = right: {$b_1$}] (b1') at (7 + 3, 0.5) {};
            \node[circle, draw = black, label = right: {$b_2$}] (b2') at (7 + 3, -0.5) {};
            \node[circle, draw = black, label = right: {$b_m$}] (b3') at (7 + 3, -2) {};
            
            \draw (w') -- (a1') -- (a2');
            \draw (a2') -- (a3') node[midway, sloped, fill = white] {\tiny $\dots$};
            \draw (a3') -- (b1') -- (b2');
            \draw (b2') -- (b3') node[midway, sloped, fill = white] {\tiny $\dots$};

            \draw[ultra thin] (w') -- (6.75 + 3, 5.5);
            \draw[ultra thin] (w') -- (7 + 3, 5.5);
            \draw[ultra thin] (w') -- (7.25 + 3, 5.5);

            \draw [gray, thin, decorate,decoration={brace,amplitude=10pt,raise=5ex}] (10, 4.2) -- (10, -2.2);
            \node at (11.75, 1) {$C$};

            \node at (2, -3.25) {$G$};
            \node at (10, -3.25) {$G'$};
        \end{tikzpicture}
        \caption{The graphs $G$ and $G'$ of \Cref{hangingpaths}.}
        \label{fig:hanging paths}
    \end{figure}

    Fix $i \geq 1$. 
    We prove the result by describing an injection $\iota: Z(G; i) \hookrightarrow Z(G'; i)$. 
    Suppose $S \in Z(G; i)$.

   \noindent \textbf{\underline{Case 1:}} We first assume that $v \in S$. 
    In this case, we set $\iota(S) = S$ (note that $V(G) = V(G')$). 
    We have to show that $S \in Z(G'; i)$.
    
    First, we consider the case when, in $G$, all neighbors of $v$ not in $A$ or $B$ can be colored without using $v$ to force any coloring. 
    Then the same is true in $G'$ as well. 
    However, in $G'$, this implies that all vertices in $C$ can now be colored since all neighbors of $v$ apart from $a_1$ are colored. 
    Hence, $v$ forces $a_1$, which in turn forces $a_2$, and so on. 
    This shows that $S \in Z(G'; i)$.

    Next, suppose that in $G$, the vertex $v$ has to be used to color one of its neighbors that is not in $A$ or $B$. 
    This means that the vertices $a_1$ and $b_1$ can be colored in $G$ without using the vertex $v$. 
    This, in turn, means that $a_1$ was colored using the vertices in $S \cap A$ and $b_1$ was colored using the vertices in $S \cap B$. 
    To show that $S \in Z(G'; i)$, we just have to show that $S \cap C$ can force the vertex $a_1$ in the path graph $C$. 
    If $a_1 \in S$, this is trivially true. 
    If $a_1 \notin S$, then either a consecutive pair of vertices of $A$ or the vertex $a_k$ belongs to $S$ (see \Cref{path_result}). 
    If a consecutive pair of vertices of $A$ is in $S$, then $S \cap C$ forces $a_1$. 
    Assuming that there is no such pair, we have $a_k \in S$. 
    Now, since $b_1$ can be colored in $B$ using $S \cap B$, the same method can be used to color $b_1$ in $S \cap C$. 
    Hence, both $b_1$ and $a_k$ can be colored, and we get a consecutive pair of colored vertices in $C$, which can be used to force $a_1$.

    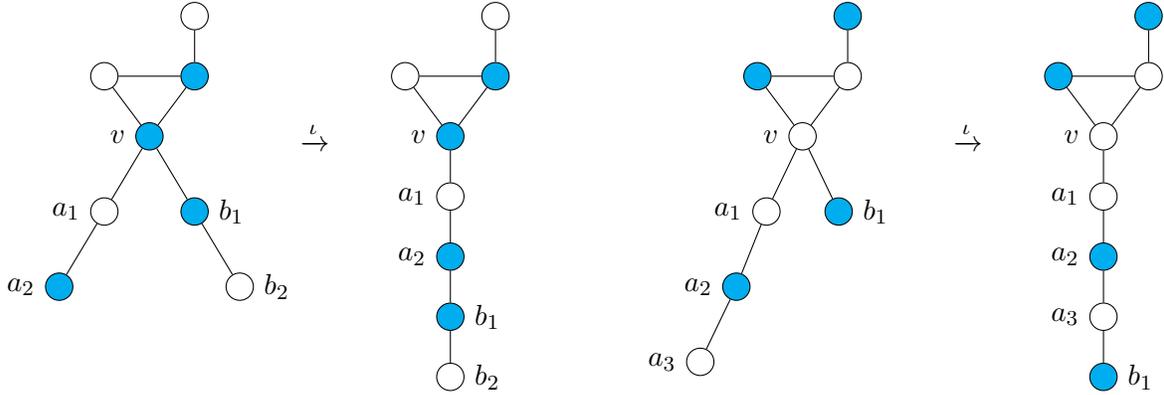
\begin{figure}[H]
        \centering
        \begin{tikzpicture}[scale = 0.8]
            \node[circle, draw = black, fill = cyan, label = left: {$a_2$}] (a2) at (0.5, 2.5) {};
            \node[circle, draw = black, label = left: {$a_1$}] (a1) at (1.25, 3.75) {};
            \node[circle, draw = black, fill = cyan, label = left: {$v$}] (w) at (2, 5) {};
            \node[circle, draw = black, fill = cyan, label = right: {$b_1$}] (b1) at (2.75, 3.75) {};
            \node[circle, draw = black, label = right: {$b_2$}] (b2) at (3.5, 2.5) {};
            \node[circle, draw = black] (a) at (1.25, 6) {};
            \node[circle, draw = black, fill = cyan] (b) at (2.75, 6) {};
            \node[circle, draw = black] (c) at (2.75, 7) {};

            \draw (a2) -- (a1) -- (w) -- (a) -- (b) -- (w) -- (b1) -- (b2);
            \draw (b) -- (c);

            \node at (4.75, 5) {$\xrightarrow{\iota}$};

            \node[circle, draw = black, fill = cyan, label = left: {$v$}] (w') at (2 + 5, 5) {};
            \node[circle, draw = black] (a') at (1.25 + 5, 6) {};
            \node[circle, draw = black, fill = cyan] (b') at (2.75 + 5, 6) {};
            \node[circle, draw = black] (c') at (2.75 + 5, 7) {};
            \draw (c') -- (b') -- (a') -- (w') -- (b');
            \node[circle, draw = black, label = left: {$a_1$}] (a1') at (7, 4) {};
            \node[circle, draw = black, fill = cyan, label = left: {$a_2$}] (a2') at (7, 3) {};
            \node[circle, draw = black, fill = cyan, label = right: {$b_1$}] (b1') at (7, 2) {};
            \node[circle, draw = black, label = right: {$b_2$}] (b2') at (7, 1) {};
            \draw (w') -- (a1') -- (a2') -- (b1') -- (b2');
        \end{tikzpicture}
        \hfill
        \begin{tikzpicture}[scale = 0.8]
            \node[circle, draw = black, label = left: {$a_3$}] (a3) at (0.3, 1.25) {};
            \node[circle, draw = black, label = left: {$a_2$}, fill = cyan] (a2) at (0.9, 2.5) {};
            \node[circle, draw = black, label = left: {$a_1$}] (a1) at (1.4, 3.75) {};
            \node[circle, draw = black, label = left: {$v$}] (w) at (2, 5) {};
            \node[circle, draw = black, label = right: {$b_1$}, fill = cyan] (b1) at (2.6, 3.75) {};
            \node[circle, draw = black, fill = cyan] (a) at (1.25, 6) {};
            \node[circle, draw = black] (b) at (2.75, 6) {};
            \node[circle, draw = black, fill = cyan] (c) at (2.75, 7) {};

            \draw (a3) -- (a2) -- (a1) -- (w) -- (a) -- (b) -- (w) -- (b1);
            \draw (b) -- (c);

            \node at (4.75, 5) {$\xrightarrow{\iota}$};

            \node[circle, draw = black, label = left: {$v$}] (w') at (2 + 5, 5) {};
            \node[circle, draw = black, fill = cyan] (a') at (1.25 + 5, 6) {};
            \node[circle, draw = black] (b') at (2.75 + 5, 6) {};
            \node[circle, draw = black, fill = cyan] (c') at (2.75 + 5, 7) {};
            \draw (c') -- (b') -- (a') -- (w') -- (b');
            \node[circle, draw = black, label = left: {$a_1$}] (a1') at (7, 4) {};
            \node[circle, draw = black, fill = cyan, label = left: {$a_2$}] (a2') at (7, 3) {};
            \node[circle, draw = black, label = left: {$a_3$}] (b1') at (7, 2) {};
            \node[circle, draw = black, label = right: {$b_1$}, fill = cyan] (b2') at (7, 1) {};
            \draw (w') -- (a1') -- (a2') -- (b1') -- (b2');
        \end{tikzpicture}
        \caption{Instances of $\iota$ corresponding to \textbf{Case 1} and \textbf{Case 2}.}
    \end{figure}

    \noindent\textbf{\underline{Case 2:}} Suppose that $v \notin S$ and $v$ can be forced in $G$ using neither $a_1$ nor $b_1$. 
    In this case as well, we set $\iota(S) = S$ and hence we have to show that $S \in Z(G'; i)$.

    This case is very similar to the previous case. 
    Note that any coloring done in $G \setminus (A \cup B)$ can be done in $G' \setminus C$ as well. 
    Our assumption says that $S$ can be used to color $v$ in $G \setminus (A \cup B)$ and hence in $G' \setminus C$. 
    Now we can assume $v$ is colored and this brings us back to the previous case.

    \noindent\textbf{\underline{Case 3:}} Suppose that $v \notin S$ and $v$ can only be forced in $G$ using $a_1$ or $b_1$. 
    This means that $S$ restricted to the path in $G$ induced by the vertices $A \cup \{v\}$ or the path induced by $B \cup \{v\}$ is a forcing set (where $v$ is not colored). 
    Hence, one of the following must hold:
    \begin{itemize}
        \item There are two consecutive vertices in $S \cap A$ or $a_k \in S$.
        \item There are two consecutive vertices in $S \cap B$ or $b_m \in S$.
    \end{itemize}
    
    In all cases \emph{apart} from when
    \begin{itemize}
        \item there are no consecutive vertices in $S \cap A$ or $S \cap B$, and
        \item $a_k \in S$ and $b_1, b_m \notin S$,
    \end{itemize}
    we define $\iota(S) = S$. 
    It can be checked that in all cases apart from the one mentioned above, $S$ restricted to the path $C \cup \{v\}$ is a forcing set. 
    This gives us that $S \in Z(G'; i)$.
    
    In the case mentioned above, we define $\iota(S)$ by `reversing the coloring on $A \cup \{v\}$'. 
    We identify any subset $X$ of $V(G)$ (or $V(G')$) with the choice function
    \begin{equation*}
        c_X(w) = 
        \begin{cases}
            1, \text{ if }w \in X\\
            0, \text{ if }w \notin X.
        \end{cases}
    \end{equation*}
    Now, setting $a_0 := v$, we define $\iota(S)$ by
    \begin{itemize}
        \item $c_{\iota(S)}(a_l) = c_S(a_{k - l})$ for all $0 \leq l \leq k$, and
        
        \item $c_{\iota(S)}(w) = c_S(w)$ for all $w \in V(G \setminus (A \cup \{v\}))$.
    \end{itemize}

    \begin{figure}[H]
        \centering
        \begin{tikzpicture}[scale = 0.8]
            \node[circle, draw = black, fill = cyan, label = left: {$a_2$}] (a2) at (0.5, 2.5) {};
            \node[circle, draw = black, label = left: {$a_1$}, fill = cyan] (a1) at (1.25, 3.75) {};
            \node[circle, draw = black, label = left: {$v$}] (w) at (2, 5) {};
            \node[circle, draw = black, label = right: {$b_1$}] (b1) at (2.75, 3.75) {};
            \node[circle, draw = black, label = right: {$b_2$}] (b2) at (3.5, 2.5) {};
            \node[circle, draw = black] (a) at (1.25, 6) {};
            \node[circle, draw = black] (b) at (2.75, 6) {};
            \node[circle, draw = black, fill = cyan] (c) at (2.75, 7) {};

            \draw (a2) -- (a1) -- (w) -- (a) -- (b) -- (w) -- (b1) -- (b2);
            \draw (b) -- (c);

            \node at (4.75, 5) {$\xrightarrow{\iota}$};

            \node[circle, draw = black, label = left: {$v$}] (w') at (2 + 5, 5) {};
            \node[circle, draw = black] (a') at (1.25 + 5, 6) {};
            \node[circle, draw = black] (b') at (2.75 + 5, 6) {};
            \node[circle, draw = black, fill = cyan] (c') at (2.75 + 5, 7) {};
            \draw (c') -- (b') -- (a') -- (w') -- (b');
            \node[circle, draw = black, label = left: {$a_1$}, fill = cyan] (a1') at (7, 4) {};
            \node[circle, draw = black, fill = cyan, label = left: {$a_2$}] (a2') at (7, 3) {};
            \node[circle, draw = black, label = right: {$b_1$}] (b1') at (7, 2) {};
            \node[circle, draw = black, label = right: {$b_2$}] (b2') at (7, 1) {};
            \draw (w') -- (a1') -- (a2') -- (b1') -- (b2');
        \end{tikzpicture}
        \hfill
        \begin{tikzpicture}[scale = 0.8]
            \node[circle, draw = black, fill = cyan, label = left: {$a_3$}] (a3) at (0.3, 1.25) {};
            \node[circle, draw = black, label = left: {$a_2$}] (a2) at (0.9, 2.5) {};
            \node[circle, draw = black, label = left: {$a_1$}, fill = cyan] (a1) at (1.4, 3.75) {};
            \node[circle, draw = black, label = left: {$v$}] (w) at (2, 5) {};
            \node[circle, draw = black, label = right: {$b_1$}] (b1) at (2.6, 3.75) {};
            \node[circle, draw = black, fill = cyan] (a) at (1.25, 6) {};
            \node[circle, draw = black] (b) at (2.75, 6) {};
            \node[circle, draw = black] (c) at (2.75, 7) {};

            \draw (a3) -- (a2) -- (a1) -- (w) -- (a) -- (b) -- (w) -- (b1);
            \draw (b) -- (c);

            \node at (4.75, 5) {$\xrightarrow{\iota}$};

            \node[circle, draw = black, label = left: {$v$}, fill = cyan] (w') at (2 + 5, 5) {};
            \node[circle, draw = black, fill = cyan] (a') at (1.25 + 5, 6) {};
            \node[circle, draw = black] (b') at (2.75 + 5, 6) {};
            \node[circle, draw = black] (c') at (2.75 + 5, 7) {};
            \draw (c') -- (b') -- (a') -- (w') -- (b');
            \node[circle, draw = black, label = left: {$a_1$}] (a1') at (7, 4) {};
            \node[circle, draw = black, fill = cyan, label = left: {$a_2$}] (a2') at (7, 3) {};
            \node[circle, draw = black, label = left: {$a_3$}] (b1') at (7, 2) {};
            \node[circle, draw = black, label = right: {$b_1$}] (b2') at (7, 1) {};
            \draw (w') -- (a1') -- (a2') -- (b1') -- (b2');
        \end{tikzpicture}
        \caption{Instances of the injection $\iota$ corresponding to \textbf{Case 3}.}
    \end{figure}
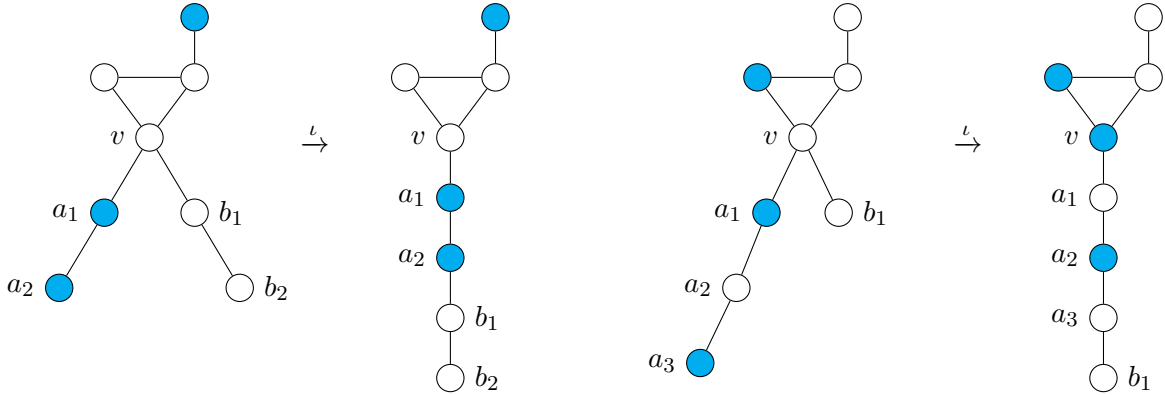

    We have to show that $\iota(S) \in Z(G'; i)$. 
    Note that for $S$ in the graph $G$, the vertex $v$ has an uncolored neighbor $b_1$. 
    Also, by the conditions on $S$, $b_1$ can only be colored using $v$. 
    This means that, at least after $v$ has been colored, all the neighbors of $v$ apart from $b_1$ can be colored without using the vertex $v$ to force any coloring.
    
    Looking at $\iota(S)$ in $G'$, note that since $v \in \iota(S)$ and $\iota(S) \setminus C = S \setminus (A \cup B)$, all the neighbors of $v$ in $G'$ apart from $a_1$ can be colored without using $v$ to force any coloring. 
    This gives us that $v$ can be used to force $a_1$, which forces $a_2$, and so on. 
    Hence, $\iota(S) \in Z(G'; i)$. 
    It can also be checked that in this case, $\iota(S) \notin Z(G; i)$ (neither $a_1$ nor $b_1$ can be colored). 
    Hence, $\iota$ is an injection, as required.
\end{proof}

Let $T$ be a tree with $n$ vertices. 
A pleasing consequence of the above proposition is that repeated application of its proof gives an injection $Z(T; i) \hookrightarrow Z(P_n; i)$ for all $i \geq 1$. 
This gives an alternate proof of \Cref{treeresult}. 
In fact, we can prove the slightly stronger result given below, which was suggested by Sam Spiro \cite{samcommunication}.

\begin{corollary}
    Let $T$ be a tree on $n$ vertices that is \emph{not} the path graph. 
    For any $1 \leq i < \frac{n}{2}$, we have $z(T; i) < z(P_n; i)$.
\end{corollary}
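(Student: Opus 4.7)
My plan is to show that the injection $\iota : Z(T;i) \hookrightarrow Z(P_n;i)$ produced by iterated application of \Cref{hangingpaths} fails to be surjective for each $1 \leq i < n/2$. The case $i=1$ is immediate: since $T$ is a non-path tree, it contains a vertex of degree $\geq 3$, so $z(T) \geq 2$ and $z(T;1)=0<2=z(P_n;1)$. For $i \geq 2$, I focus on the final reduction step. Each application of \Cref{hangingpaths} combines two hanging paths at a vertex and decreases that vertex's degree by exactly one; it follows that the penultimate tree $T^*$ in the reduction chain must be a \emph{spider}: a tree with a single branching vertex $v$ of degree $3$ adjacent to three hanging paths $L_1, L_2, L_3$ of lengths $k_1 \geq k_2 \geq k_3 \geq 1$, where $n = k_1 + k_2 + k_3 + 1$. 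The final application of \Cref{hangingpaths} combines the two shortest legs $L_2, L_3$ to produce $P_n$ along with an injection $\iota^* : Z(T^*; i) \hookrightarrow Z(P_n; i)$; it then suffices to exhibit a set in $Z(P_n;i) \setminus \iota^*(Z(T^*;i))$.

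Writing $L_1 = (d_1, \ldots, d_{k_1})$ with $d_1 \sim v$, I propose the witness
\[
  S' = \{v, d_1\} \cup Y,
\]
where $Y \subseteq (L_1 \setminus \{d_1\}) \cup L_2 \cup L_3$ has size $i - 2$ and $Y \cap L_j$ is a non-forcing subset of the path $L_j$ (containing no endpoint and no two consecutive vertices) for $j = 2, 3$. That $S'$ is forcing in $P_n$ is immediate since $\{v, d_1\}$ is a consecutive pair in $P_n$. To see $S'$ is non-forcing in $T^*$, I use the easy characterization that a set is forcing in the spider if and only if at least two of its leg-restrictions force the corresponding paths: here $S' \cap L_1$ contains the endpoint $d_1$ and so is forcing in $L_1$, whereas $Y \cap L_2$ and $Y \cap L_3$ are non-forcing by construction. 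A short parity check using that the maximum size of a non-forcing subset of $P_k$ is $\lfloor (k-1)/2 \rfloor$ shows such a $Y$ exists for every $i < n/2$.

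The main obstacle is verifying that $S'$ is not in the image of $\iota^*$. In every subcase of the proof of \Cref{hangingpaths} except the reflection subcase of Case 3, the map acts as the identity on $S$; hence if $S'$ lay in the image via one of these identity subcases, we would have $S' \in Z(T^*;i)$, contradicting its non-forcingness. The remaining case is the reflection subcase of Case 3, where $\iota^*(S)$ is the reflection of $S$ on $L_2 \cup \{v\}$. Since the reflection does not touch $L_1$, any preimage $S$ would still contain $d_1$; but $d_1$ being an endpoint of $L_1$ then makes $S \cap L_1$ a forcing set of $L_1$, so $d_1$ could be used to force $v$ in the spider, contradicting the Case 3 premise that $v$ is forceable only through the first vertex of $L_2$ or $L_3$. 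Combining $z(T^*; i) < z(P_n; i)$ with the weak inequalities from the earlier reduction steps yields $z(T; i) < z(P_n; i)$.
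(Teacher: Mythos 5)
Your overall architecture matches the paper's: handle $i=1$ by noting non-path trees have no forcing singleton, reduce via iterated applications of \Cref{hangingpaths} to a three-legged spider $T^*$, and exhibit a set in $Z(P_n;i)$ outside the image of the final injection $\iota^*$. The gap is in your choice of witness and, specifically, in your treatment of the reflection subcase of Case 3. Your witness $S' = \{v, d_1\} \cup Y$ places the consecutive pair on the leg $L_1$ that is \emph{not} being merged and blocks the two merged legs $L_2, L_3$; but the reflection in Case 3 acts precisely on $A \cup \{v\} = L_2 \cup \{v\}$ and can therefore unblock $L_2$. Concretely, take $T^*$ to be the spider with legs $L_1 = d_1 d_2$, $L_2 = e_1$, $L_3 = f_1$ (so $n=5$) and $i = 2$, so $S' = \{v, d_1\}$. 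The set $S = \{d_1, e_1\}$ is forcing in $T^*$ ($e_1$ forces $v$, then $v$ forces $f_1$ and $d_1$ forces $d_2$), and it lands in the reflection subcase: $v \notin S$ and $v$ can only be forced via $e_1$, because $d_1$ has the two uncolored neighbors $v$ and $d_2$ and $d_2$ is unreachable; moreover $a_k = e_1 \in S$, $b_1 = b_m = f_1 \notin S$, and there are no consecutive pairs. The reflection on $\{v, e_1\}$ sends $S$ to exactly $\{v, d_1\} = S'$, so your witness \emph{is} in the image of $\iota^*$. The false step is the inference from ``$d_1 \in S$ is an endpoint of $L_1$, hence $S \cap L_1$ forces the standalone path $L_1$'' to ``$d_1$ can be used to force $v$ in the spider'': when $d_1$'s only credential is being the near endpoint, it is paralyzed inside the spider by its second uncolored neighbor $v$. (The same conflation makes your ``if and only if'' characterization of forcing sets in spiders false, though only in the direction you do not use.)

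The paper avoids this by choosing the witness differently: it contains $v$ and $a_1$ with $a_1$ on one of the two \emph{merged} legs, and imposes non-forcing restrictions on the other merged leg $B$ and on the unmerged leg $c_1, \ldots, c_p$. Since the reflection only permutes $A \cup \{v\}$, it cannot disturb either blocked leg, so the witness \emph{and} its reflection are both non-forcing in $T^*$; this rules out every subcase of $\iota^*$ at once, without any case analysis on how $v$ could be forced. Your argument could likely be repaired either by adopting that witness, or by additionally requiring $Y \cap L_1$ to contain the far endpoint $d_{k_1}$ or a consecutive pair (so that $L_1$ genuinely colors itself and forces $v$ in the spider, contradicting the Case 3 premise), but the latter changes the range of $i$ for which the witness exists and would force you to redo the final counting step.
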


\begin{proof}
    It can be checked that path graphs are the only graphs that have forcing sets of size $1$. 
    Hence, the result is true for $i = 1$ and we have to prove it for $2 \leq i \leq \lfloor\frac{n - 1}{2}\rfloor$. 
    Repeatedly applying \Cref{hangingpaths}, we see that it is enough to prove the result for the case when $T$ has a vertex $v$ with two hanging paths $A$ and $B$ such that $T \setminus (A \cup B)$ is a path graph.

    To prove the result, we show that the injection $\iota: Z(T; i) \hookrightarrow Z(P_n; i)$ (defined in the proof of \Cref{hangingpaths}) is not surjective. 
    We label the vertices of $A$ and $B$ just as before and label the vertices in the path graph $T \setminus (A \cup B \cup \{v\})$ as $c_1, c_2, \ldots, c_p$ where $c_p$ is adjacent to $v$. 
    Hence, the path graph $P_n$ we consider is the one with vertices labeled $c_1, c_2, \ldots, c_p, v, a_1, a_2, \ldots, a_k, b_1, b_2, \ldots, b_m$.

    Consider any $S \subset V(P_n)$ that satisfies the following conditions: 
    \begin{itemize}
        \item Both $v, a_1 \in S$.
        \item When $S$ is restricted to the path induced by $\{b_1, b_2, \ldots, b_m\}$, we have a non-forcing set. In particular, $b_1 \notin S$. 
        \item When $S$ is restricted to the path induced by $\{c_1, c_2, \ldots, c_p\}$, we have a non-forcing set. In particular, $c_p \notin S$. 
    \end{itemize}
    Since $v, a_1 \in S$, it is clear that any such $S$ is forcing in $P_n$. 
    Note that if we consider $S$ (or $S$ with the coloring on $A \cup \{v\}$ `reversed') as a subset of $T$, then neither $b_1$ nor $c_p$ can be colored and hence $S$ is non-forcing in $T$.

    \begin{figure}[ht]
        \centering
        \begin{tikzpicture}[xscale = 0.6,scale = 0.9]
            \node [circle, draw = black, label = left: {$a_3$}, fill = cyan] (a3) at (0, 1) {};
            \node [circle, draw = black, label = left: {$a_2$}] (a2) at (1, 2) {};
            \node [circle, draw = black, label = left: {$a_1$}, fill = cyan] (a1) at (2, 3) {};

            \node [circle, draw = black, label = left: {$v$}, fill = cyan] (v) at (3, 4) {};

            \node [circle, draw = black, label = right: {$b_4$}] (b4) at (7 - 0.4, 0) {};
            \node [circle, draw = black, label = right: {$b_3$}, fill = cyan] (b3) at (6 -0.3, 1) {};
            \node [circle, draw = black, label = right: {$b_2$}] (b2) at (5 -0.2, 2) {};
            \node [circle, draw = black, label = right: {$b_1$}] (b1) at (4 -0.1, 3) {};

            \node [circle, draw = black, label = right: {$c_3$}] (c3) at (3, 5) {};
            \node [circle, draw = black, label = right: {$c_2$}, fill = cyan] (c2) at (3, 6) {};
            \node [circle, draw = black, label = right: {$c_1$}] (c1) at (3, 7) {};

            \draw (a3)--(a2)--(a1)--(v)--(b1)--(b2)--(b3)--(b4);
            \draw (v)--(c3)--(c2)--(c1);
        \end{tikzpicture}
        \caption{A non-forcing set in $T$ that is forcing in $P_n$.}
    \end{figure}
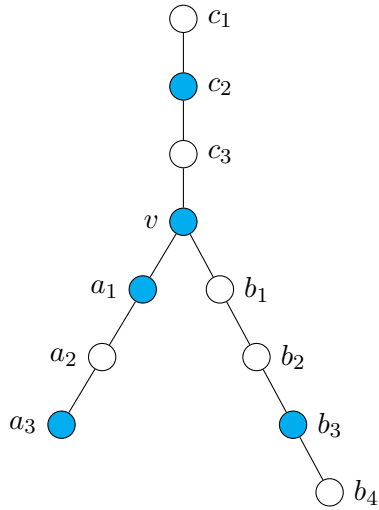

    Since the largest size of any non-forcing set in a path graph with $l$ vertices is $\lfloor\frac{l - 1}{2}\rfloor$, we can construct such an $S$ of size $i$ for all $i$ such that $2 \leq i \leq \left\lfloor\frac{m - 1}{2}\right\rfloor + \left\lfloor\frac{p - 1}{2}\right\rfloor + k + 1$. 
    Using the fact that $k, m, p \geq 1$ and that for any $r, s \geq 1$, $\lfloor\frac{r - 1}{2}\rfloor + \lfloor\frac{s - 1}{2}\rfloor + 1 \geq \lfloor\frac{r + s - 1}{2}\rfloor$, we have
    \begin{align*}
        \left\lfloor\frac{m - 1}{2}\right\rfloor + \left\lfloor\frac{p - 1}{2}\right\rfloor + k + 1 &\geq \left\lfloor\frac{m + p - 1}{2}\right\rfloor + k\\
        &\geq \left\lfloor\frac{m + p - 1}{2}\right\rfloor + \left\lfloor\frac{k}{2}\right\rfloor + 1 \geq \left\lfloor\frac{n - 1}{2}\right\rfloor.
    \end{align*} 
    This proves the required result.
\end{proof}

\section{Future directions}

Upon having \Cref{treeresult}, one would hope that \Cref{conj} can be proved by showing that any connected graph $G$ has a spanning tree $T$ such that $z(G; i) \leq z(T; i)$ for all $i \geq 1$. 
Unfortunately, this is not true. 
It can be checked that the graph in \Cref{spantree} does not have a spanning tree with this property.

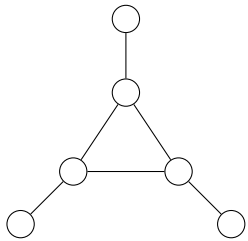
\begin{figure}[ht]
    \centering
    \begin{tikzpicture}[scale = 0.7]
        \node [circle, draw = black] (1) at (0, 0) {};
        \node [circle, draw = black] (2) at (1, 1) {};
        \node [circle, draw = black] (3) at (2, 2.5) {};
        \node [circle, draw = black] (4) at (2, 3.9) {};
        \node [circle, draw = black] (5) at (3, 1) {};
        \node [circle, draw = black] (6) at (4, 0) {};

        \draw (1)--(2)--(3)--(4);
        \draw (3)--(5)--(6);
        \draw (5)--(2);
    \end{tikzpicture}
    \caption{A graph $G$ without a spanning tree $T$ such that $[G] \leq [T]$.}
    \label{spantree}
\end{figure}

However, this does lead to asking whether, given a graph $G$, one can find a `nice' graph $H$ such that $z(G; i) \leq z(H; i)$ for all $i \geq 1$. 
Ideally, $H$ should be a graph that satisfies \Cref{conj} and has the same number of vertices as $G$.

One way to address this question, as we have done in this article, is to consider graph operations that increase the number of forcing sets of each cardinality. 
For example, some other operations that could be studied are: subdivision of an edge, product of graphs, and wedge of graphs.

Another direction is to consider the following equivalence and poset structure on graphs: 
We say that two graphs $G$ and $H$ with $n$ vertices are equivalent if $z(G; i) = z(H; i)$ for all $i \geq 1$. 
Denote the equivalence class of a graph $G$ by $[G]$. 
This equivalence has been studied in \cite[Section 5.1]{boyer19}. 
For example, they show that $[P_n] = \{P_n\}$. 
We define a poset structure on these equivalence classes by setting $[G] \leq [H]$ if $z(G; i) \leq z(H; i)$ for all $i \geq 1$. 
Note that \Cref{conj} asserts that $[P_n]$ is the maximum element of this poset.

What else can be said about this poset? 
For example, what are the graphs $G$ such that $[G] \leq [C_n]$? 
If $[P_n]$ is indeed the maximum of this poset, what can be said about the coatoms, i.e., the elements covered by $[P_n]$?

\section*{Acknowledgements}
The authors would like to thank Sam Spiro and Luyining Gan for constructive comments on the early draft of this article, which significantly enhanced its overall presentation. Krishna Menon is partially supported by a grant from the Infosys Foundation. 
This work was done during the first author's visit to the Indian Institute of Technology Bhilai, funded by SERB, India, via the SRG/2022/000314 project. 
The computer algebra system SageMath \cite{Sage} provided valuable assistance in studying examples.

\bibliographystyle{abbrv}

\begin{thebibliography}{10}
	
	\bibitem{var1}
	J.~S. Alameda.
	\newblock Leaky forcing: a new variation of zero forcing.
	\newblock {\em Australas. J. Combin.}, 88:308--326, 2024.
	
	\bibitem{borio10}
	F.~Barioli, W.~Barrett, S.~M. Fallat, H.~T. Hall, L.~Hogben, B.~Shader,
	P.~van~den Driessche, and H.~van~der Holst.
	\newblock Zero forcing parameters and minimum rank problems.
	\newblock {\em Linear Algebra Appl.}, 433(2):401--411, 2010.
	
	\bibitem{gc1}
	F.~Barioli, W.~Barrett, S.~M. Fallat, H.~T. Hall, L.~Hogben, B.~Shader,
	P.~van~den Driessche, and H.~van~der Holst.
	\newblock Parameters related to tree-width, zero forcing, and maximum nullity
	of a graph.
	\newblock {\em J. Graph Theory}, 72(2):146--177, 2013.
	
	\bibitem{powerdomi1}
	K.~F. Benson, D.~Ferrero, M.~Flagg, V.~Furst, L.~Hogben, V.~Vasilevska, and
	B.~Wissman.
	\newblock Zero forcing and power domination for graph products.
	\newblock {\em Australas. J. Combin.}, 70:221--235, 2018.
	
	\bibitem{boyer19}
	K.~Boyer, B.~Brimkov, S.~English, D.~Ferrero, A.~Keller, R.~Kirsch,
	M.~Phillips, and C.~Reinhart.
	\newblock The zero forcing polynomial of a graph.
	\newblock {\em Discrete Appl. Math.}, 258:35--48, 2019.
	
	\bibitem{powerdomi2}
	C.~Bozeman, B.~Brimkov, C.~Erickson, D.~Ferrero, M.~Flagg, and L.~Hogben.
	\newblock Restricted power domination and zero forcing problems.
	\newblock {\em J. Comb. Optim.}, 37(3):935--956, 2019.
	
	\bibitem{var3}
	B.~Brimkov and I.~V. Hicks.
	\newblock Complexity and computation of connected zero forcing.
	\newblock {\em Discrete Appl. Math.}, 229:31--45, 2017.
	
	\bibitem{burg13}
	D.~Burgarth, D.~D'Alessandro, L.~Hogben, S.~Severini, and M.~Young.
	\newblock Zero forcing, linear and quantum controllability for systems evolving
	on networks.
	\newblock {\em IEEE Trans. Automat. Control}, 58(9):2349--2354, 2013.
	
	\bibitem{quantum}
	D.~Burgarth and V.~Giovannetti.
	\newblock Full control by locally induced relaxation.
	\newblock {\em Phys. Rev. Lett.}, 99(10):100501, 2007.
	
	\bibitem{gc2}
	S.~Butler, J.~Grout, and H.~T. Hall.
	\newblock Using variants of zero forcing to bound the inertia set of a graph.
	\newblock {\em Electron. J. Linear Algebra}, 30:1--18, 2015.
	
	\bibitem{algo}
	B.~Cameron, J.~Janssen, R.~Matthew, and Z.~Zhang.
	\newblock An approximation algorithm for zero forcing.
	\newblock {\em arXiv preprint arXiv:2402.08866}, 2024.
	
	\bibitem{kforcing}
	Y.~Caro and R.~Pepper.
	\newblock Dynamic approach to {$k$}-forcing.
	\newblock {\em Theory Appl. Graphs}, 2(2):Art. 2, 5, 2015.
	
	\bibitem{sam24}
	B.~Curtis, L.~Gan, J.~Haddock, R.~Lawrence, and S.~Spiro.
	\newblock Zero forcing with random sets.
	\newblock {\em Discrete Math.}, 347(5):Paper No. 113944, 2024.
	
	\bibitem{dreyer09}
	P.~A. Dreyer, Jr. and F.~S. Roberts.
	\newblock Irreversible {$k$}-threshold processes: graph-theoretical threshold
	models of the spread of disease and of opinion.
	\newblock {\em Discrete Appl. Math.}, 157(7):1615--1627, 2009.
	
	\bibitem{var4}
	J.~Ekstrand, C.~Erickson, H.~T. Hall, D.~Hay, L.~Hogben, R.~Johnson,
	N.~Kingsley, S.~Osborne, T.~Peters, J.~Roat, A.~Ross, D.~D. Row, N.~Warnberg,
	and M.~Young.
	\newblock Positive semidefinite zero forcing.
	\newblock {\em Linear Algebra Appl.}, 439(7):1862--1874, 2013.
	
	\bibitem{prob1}
	S.~English, C.~MacRury, and P.~Pra\l~at.
	\newblock Probabilistic zero forcing on random graphs.
	\newblock {\em European J. Combin.}, 91:Paper No. 103207, 22, 2021.
	
	\bibitem{geneson}
	J.~Geneson, R.~Haas, and L.~Hogben.
	\newblock Reconfiguration graphs of zero forcing sets.
	\newblock {\em Discrete Appl. Math.}, 329:126--139, 2023.
	
	\bibitem{var2}
	F.~Goldberg and A.~Berman.
	\newblock Zero forcing for sign patterns.
	\newblock {\em Linear Algebra Appl.}, 447:56--67, 2014.
	
	\bibitem{AIM}
	A.~M. R.-S. G.~W. Group.
	\newblock Zero forcing sets and the minimum rank of graphs.
	\newblock {\em Linear Algebra Appl.}, 428(7):1628--1648, 2008.
	
	\bibitem{powerdomi3}
	T.~W. Haynes, S.~M. Hedetniemi, S.~T. Hedetniemi, and M.~A. Henning.
	\newblock Domination in graphs applied to electric power networks.
	\newblock {\em SIAM J. Discrete Math.}, 15(4):519--529, 2002.
	
	\bibitem{var5}
	L.~Hogben, K.~F. Palmowski, D.~E. Roberson, and M.~Young.
	\newblock Fractional zero forcing via three-color forcing games.
	\newblock {\em Discrete Appl. Math.}, 213:114--129, 2016.
	
	\bibitem{huang10}
	L.-H. Huang, G.~J. Chang, and H.-G. Yeh.
	\newblock On minimum rank and zero forcing sets of a graph.
	\newblock {\em Linear Algebra Appl.}, 432(11):2961--2973, 2010.
	
	\bibitem{prob2}
	C.~X. Kang and E.~Yi.
	\newblock Probabilistic zero forcing in graphs.
	\newblock {\em Bull. Inst. Combin. Appl.}, 67:9--16, 2013.
	
	\bibitem{row11}
	D.~D. Row.
	\newblock Zero forcing number, path cover number, and maximum nullity of cacti.
	\newblock {\em Involve}, 4(3):277--291, 2011.
	
	\bibitem{samcommunication}
	S.~Spiro.
	\newblock Personal communication, 2024.
	
	\bibitem{p2solve}
	S.~Spiro.
	\newblock {P}roblems that {I} would like {S}omebody to {S}olve, Version of
	August 7, 2023.
	\href{https://samspiro.xyz/Research/P2Solve.pdf}{https://samspiro.xyz/Research/P2Solve.pdf}.
	
	\bibitem{outerplanar}
	M.~M. Sys{\l}o.
	\newblock Characterizations of outerplanar graphs.
	\newblock {\em Discrete Math.}, 26(1):47--53, 1979.
	
	\bibitem{Sage}
	{The Sage Developers}.
	\newblock {\em {S}ageMath, the {S}age {M}athematics {S}oftware {S}ystem
		({V}ersion 9.5.0)}, 2022.
	\newblock {\tt https://www.sagemath.org}.
	
	\bibitem{yang13}
	B.~Yang.
	\newblock Fast-mixed searching and related problems on graphs.
	\newblock {\em Theoret. Comput. Sci.}, 507:100--113, 2013.
	
\end{thebibliography}

\end{document}